\newtheorem{theorem}{Theorem}[section]
\newtheorem{proposition}{Proposition}[section]
\newtheorem{lemma}{Lemma}[section]
\newtheorem{remark}{Remark}[section]
\numberwithin{equation}{section}
\title[inverse medium problem]{H\"older stability for an inverse medium problem\\ with  internal data}
\thanks{The authors are supported by the grant ANR-17-CE40-0029 of the French National Research Agency ANR (project MultiOnde).}
\author[Mourad Choulli]{Mourad Choulli}
\address{Mourad Choulli, IECL, UMR CNRS 7502, Universit\'e de Lorraine, Boulevard des Aiguillettes BP 70239 54506 Vandoeuvre Les Nancy cedex- Ile du Saulcy - 57 045 Metz Cedex 01 France}
\email{mourad.choulli@univ-lorraine.fr}
\author[Faouzi Triki]{Faouzi Triki}
\address{Faouzi Triki, Laboratoire Jean Kuntzmann,  UMR CNRS 5224, 
Universit\'e  Grenoble-Alpes, 700 Avenue Centrale,
38401 Saint-Martin-d'H\`eres, France}
\email{faouzi.triki@univ-grenoble-alpes.fr}
\begin{document}

\begin{abstract}

We are interested in an inverse medium problem with internal data. This problem is originated from multi-waves imaging.
 We aim in the present work to study the well-posedness of the inversion in terms of the boundary conditions. We precisely show that we have actually a stability estimate of H\"older type. For sake of simplicity, we limited our study to the class of Helmholtz equations $\Delta +V$ with bounded potential $V$.

\medskip
\noindent
{\bf Mathematics subject classification :} 35R30.

\smallskip
\noindent
{\bf Key words :} Helmholtz equation, inverse medium problem, internal data, H\"older stability, unique continuation.
\end{abstract}

\maketitle

\section{Introduction}

Let $\Omega$ be a $C^2$-smooth bounded domain of $\mathbb{R}^n$, $n=2,3$, with boundary $\Gamma$. 

Set
\[
\mathscr{D}=\{ V\in L^\infty (\Omega );\; \mbox{$0$ is not an eigenvalue of}\; A_V\},
\]
where $A_V:L^2(\Omega )\rightarrow L^2(\Omega )$ is the unbounded operator defined by 
\[
A_V=-\Delta -V\; \mbox{with}\; D(A_V)=H^2(\Omega )\cap H_0^1(\Omega).
\]
Note that
\[
\mathscr{D}\supset \{V;\; V(x)=\lambda ,\; x\in \Omega,\; \mbox{for some}\; \lambda \in \mathbb{R}\setminus \sigma (A_0)\}.
\]
Here $A_0$ is $A_V$ when $V=0$ and $\sigma (A_0)$ is the spectrum of $A_0$.

Let $0<k<1$ be given and let $\overline{V}\in \mathscr{D}$ so that $2v_0\le \overline{V}$, where $v_0>0$ is fixed. Consider then $\mathscr{D}_0(k,v_0,\overline{V})$, the subset of $\mathscr{D}$ of those functions $V\in L^\infty (\Omega )$ satisfying
\[
\|V-\overline{V}\|_{L^\infty (\Omega )}\le \min \left( k /\|A^{-1}_{\overline{V}}\|_{\mathscr{B}(L^2(\Omega))},v_0 \right).
\]

Pick $p>n$ and fix $h\in W^{2-1/p,p}(\Gamma )$ non identically equal to zero and denote by $u_V$, $V\in \mathscr{D}_0(k,v_0,\overline{V})$, the solution of the following BVP for the Helmholtz equation:
\[
\Delta u+Vu=0\; \mbox{in}\; \Omega \quad \mbox{and}\quad u=h\; \mbox{on}\; \Gamma .
\]
According to \cite[Theorem 3.1, page 1782]{CT}, $u_V\in W^{2,p}(\Omega )$ and the following estimate holds
\begin{equation}\label{1}
\|u_V\|_{W^{2,p}(\Omega )}\le M=M\left(\Omega ,p,v_0, k ,\overline{V}, h\right ),\quad V\in \mathscr{D}_0(k,v_0,\overline{V}).
\end{equation}

We are mainly interested in determining the absorption coefficient $V$ from the internal data
\[
I_V=Vu_V^2.
\]

This inverse problem is originated from multi-waves imaging. The term multi-waves refers to the fact that two  types of physical
waves are used to probe the medium under study. Usually, the first wave is sensitive to the contrast
of the desired parameter,  the other types can carry the information revealed by the first type of waves to the boundary of the medium  where measurements can be taken. In the present work, we assume that the first inversion has been performed, that is
the internal data  $I_V$  is retrieved, and we focus on the second step. We refer to \cite{ABCTF, ACGRT, AGNS, BK, BS, BU, CT} and reference therein for further details.

Choose $K>0$  sufficiently large in such a way that
\[
\mathscr{D}_1(k,v_0,\overline{V},K)=\mathscr{D}_0(k,v_0,\overline{V})\cap \{V\in C^{0,1}(\overline{\Omega});\; \|V\|_{C^{0,1}(\overline{\Omega})}\le K \}\ne\o.
\]
Note that, according to Rademacher's theorem, $C^{0,1}(\overline{\Omega})$ is continuously embedded in $W^{1,\infty}(\Omega )$.

For the inverse problem under consideration we are going to prove various H\"older stability estimates.

\begin{theorem}\label{theorem1.1} $(\textrm{interior stability})$
Let $\omega \Subset \Omega$. Then, there exist two constants $C=C\left(\Omega ,p,v_0, k ,\overline{V}, h ,K ,\omega \right )>0$ and $\mu =\mu \left(\Omega ,p,v_0, k ,\overline{V}, h ,K ,\omega \right )$ so that, for any $V,\tilde{V} \in \mathscr{D}_1(k,v_0,\overline{V},K)$
satisfying $V= \tilde V$ on $\Gamma$, we have
\[
\| V-\tilde{V}\|_{L^\infty (\omega )} \le C\left\| I_V^{1/2}-I_{\tilde{V}}^{1/2}\right\|_{H^1(\Omega )}^\mu.
\]
\end{theorem}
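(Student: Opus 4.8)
The plan is to convert the problem about recovering $V$ from $I_V = Vu_V^2$ into a problem about the difference of solutions $w = u_V - u_{\tilde V}$, exploiting the fact that $I_V^{1/2} = V^{1/2}u_V$ (up to sign, using $V \ge v_0 > 0$), so that control of $\|I_V^{1/2} - I_{\tilde V}^{1/2}\|_{H^1(\Omega)}$ yields control of a weighted difference of the solutions and their gradients. The key algebraic observation is that $V - \tilde V$ can be expressed through the PDEs: since $\Delta u_V + Vu_V = 0$ and $\Delta u_{\tilde V} + \tilde V u_{\tilde V} = 0$, subtracting gives $\Delta w + V w = (\tilde V - V)u_{\tilde V}$ in $\Omega$, with $w = 0$ on $\Gamma$ (because $u_V = u_{\tilde V} = h$ there). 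Thus $(\tilde V - V)u_{\tilde V} = \Delta w + Vw$, and the task reduces to estimating $w$ in a strong enough norm on the interior subdomain $\omega$, while controlling $u_{\tilde V}$ away from its zero set.

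First I would make precise the relation between $I_V^{1/2} - I_{\tilde V}^{1/2}$ and $w$. Writing $I_V^{1/2} = \sqrt{V}\,u_V$ and $I_{\tilde V}^{1/2} = \sqrt{\tilde V}\,u_{\tilde V}$, and using that on $\Gamma$ we have $V = \tilde V$ and $u_V = u_{\tilde V} = h$, I expect the $H^1(\Omega)$ smallness of $I_V^{1/2} - I_{\tilde V}^{1/2}$ to propagate, via the uniform bounds \eqref{1} and the Lipschitz control $\|V\|_{C^{0,1}} \le K$, into an $H^1$-type smallness of $w$ together with control on $V - \tilde V$ in regions where $u_{\tilde V}$ is bounded away from zero. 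Next, I would invoke a quantitative unique continuation estimate for the Helmholtz operator $\Delta + V$ to propagate the smallness from the boundary (where $w$ and its normal derivative are small, since $V = \tilde V$ on $\Gamma$ forces agreement of the Cauchy data of the weighted solutions) into the interior subdomain $\omega \Subset \Omega$; this is precisely the step that produces the Hölder exponent $\mu$, as three-balls or Carleman-type inequalities yield Hölder-interpolated bounds. Finally, combining the identity $(\tilde V - V)u_{\tilde V} = \Delta w + Vw$ with interior elliptic regularity for $w$ and a lower bound for $|u_{\tilde V}|$ on $\omega$ gives $\|V - \tilde V\|_{L^\infty(\omega)}$ controlled by a Hölder power of the data.

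The main obstacle, and the step requiring the most care, is the possible vanishing of $u_{\tilde V}$: the identity $(\tilde V - V) = (\Delta w + Vw)/u_{\tilde V}$ degenerates at the nodal set of $u_{\tilde V}$, so one cannot simply divide. I expect this to be handled by the strong positivity hypothesis $2v_0 \le \overline V$ and the smallness of $\|V - \overline V\|_{L^\infty}$, which should force $V \ge v_0 > 0$ and, together with the boundary data $h \not\equiv 0$ and a maximum-principle or unique-continuation argument, yield a quantitative lower bound $|u_{\tilde V}| \ge c > 0$ on $\overline\omega$ (or at least control of the set where $u_{\tilde V}$ is small, compensated by the Hölder gain). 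A secondary difficulty is that the natural estimate on $V - \tilde V$ coming from the division involves $\Delta w$, i.e. two derivatives of $w$, whereas the data only controls one derivative; I would resolve this by first estimating $V - \tilde V$ directly from the weighted quantity $I_V^{1/2} - I_{\tilde V}^{1/2}$ (which, being $\sqrt{V}u_V - \sqrt{\tilde V}u_{\tilde V}$, encodes $V - \tilde V$ at the level of the function values rather than through the Laplacian) and using the $H^1$ rather than $H^2$ norm, thereby matching the regularity of the available internal data and avoiding an artificial loss of derivatives.
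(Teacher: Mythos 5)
There is a genuine gap, and it sits exactly where you placed your main hope. Your plan hinges on dividing the identity $(\tilde V-V)u_{\tilde V}=\Delta w+Vw$ by $u_{\tilde V}$, justified by a ``quantitative lower bound $|u_{\tilde V}|\ge c>0$ on $\overline\omega$'' obtained from a maximum principle or unique continuation. No such bound exists for this class of equations: the operator $\Delta+V$ with $V\ge v_0>0$ does not obey a maximum principle, and nothing in the hypotheses keeps $V$ below the first Dirichlet eigenvalue (the assumption $2v_0\le \overline V$ bounds the \emph{potential} from below, not the solution). Solutions of $\Delta u+Vu=0$ with positive boundary data generically oscillate and have interior nodal sets --- take $\Omega$ a ball, $V$ a large constant and $h=1$: the radial solution vanishes on interior spheres. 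Unique continuation only tells you that $u_{\tilde V}$ cannot vanish on an open set; it gives no pointwise positive lower bound on a compact subdomain. Your fallback (``control of the set where $u_{\tilde V}$ is small, compensated by the H\"older gain'') is precisely the hard core of the problem, and it is what the paper's entire Section 2 and Appendix A are built to supply: instead of ever dividing by $u_V$, the paper proves a \emph{weighted interpolation inequality} (Theorem \ref{theorem-wii}) for weights $w=u_V^2$, whose proof rests on frequency-function/doubling estimates (Lemma \ref{lemma-a2}, Proposition \ref{proposition-a2}) and the quantitative lower bound $e^{-e^{c/\delta}}\le\|u\|_{L^2(B(x_0,\delta))}$ of Theorem \ref{theorem2.1}. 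These yield the \emph{integral} lower bound $\|u_V\|_{L^2(B(x_0,r))}\ge Cr^c$, which survives across the nodal set and is what makes $\| V-\tilde V\|_{L^\infty(\omega)}\le C\|(V-\tilde V)u_V^2\|_{L^1(\Omega)}^{2\mu}$ possible.

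A second, independent problem is the unique continuation step you propose for $w=u_V-u_{\tilde V}$: the equation $\Delta w+Vw=(\tilde V-V)u_{\tilde V}$ has as right-hand side exactly the unknown quantity you are trying to estimate, so three-ball or Carleman propagation of smallness for $w$ cannot be run without already controlling that source term --- the argument is circular as stated. (The paper does use a Cauchy-problem stability result of this flavour, but only in Theorem \ref{theorem1.2b}, and applied to the quantity $I_V^{1/2}(V^{-1/2}-\tilde V^{-1/2})$, which satisfies an elliptic equation whose right-hand side is controlled by the \emph{data}, not by the unknown.) For Theorem \ref{theorem1.1} the paper bypasses $w$ altogether: it combines the weighted estimate $\|I_V^{1/2}(V-\tilde V)\|_{H^1(\Omega)}\le C\|I_V^{1/2}-I_{\tilde V}^{1/2}\|_{H^1(\Omega)}^{1/2}$ from \cite{CT} with elementary algebraic identities expressing $(V-\tilde V)u_V^2$ in terms of $I_V^{1/2}-I_{\tilde V}^{1/2}$ and $(V-\tilde V)I_V^{1/2}$, and then applies the weighted interpolation inequality. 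Your instinct to work at the $H^1$ level and avoid a loss of derivatives is sound --- that is indeed how the paper proceeds --- but without the weighted interpolation machinery (or some substitute handling the nodal set quantitatively), the proposal does not close.
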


A similar result was already proved by G. Alessandrini in \cite{Al} under different assumptions. 

\smallskip
When $h$ does not vanish on  a part of 
$\Gamma$, we  obtain  improved results which we state in the following theorems.

\begin{theorem}\label{theorem1.2} 
Under the assumption  $|h|>\kappa >0$ on $\Gamma$, there exist two constants $C=C\left(\Omega ,p,v_0, k ,\overline{V}, h ,K\right )>0$ and $\mu =\mu \left(\Omega ,p,v_0, k ,\overline{V}, h ,K
 \right )$ so that, for any $V,\tilde{V} \in \mathscr{D}_1(k,v_0,\overline{V},K)$ satisfying
 $V= \tilde V$ on $\Gamma$, we have
\[
\| V-\tilde{V}\|_{L^\infty (\Omega )} \le C\left\| I_V^{1/2}-I_{\tilde{V}}^{1/2}\right\|_{H^1(\Omega )}^\mu.
\]
\end{theorem}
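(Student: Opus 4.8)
The plan is to combine the interior estimate of Theorem~\ref{theorem1.1} with a boundary-layer analysis made possible by the non-vanishing of $h$. Write $u=u_V$, $\tilde u=u_{\tilde V}$, $y=u-\tilde u$, and $g=I_V^{1/2}$, $\tilde g=I_{\tilde V}^{1/2}$, so that $g^2=Vu^2$ and $\tilde g^2=\tilde V\tilde u^2$. Since $|h|>\kappa$ on $\Gamma$ and since \eqref{1} provides a uniform $W^{2,p}\hookrightarrow C^1$ bound on $u_V$ and $u_{\tilde V}$ over $\mathscr{D}_1(k,v_0,\overline V,K)$, there is a width $\delta>0$, depending only on the a priori data, such that $|u|,|\tilde u|\ge \kappa/2$ on the collar $\mathscr{N}=\{x\in\Omega:\ \mathrm{dist}(x,\Gamma)<\delta\}$. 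Set $\omega=\{x\in\Omega:\ \mathrm{dist}(x,\Gamma)>\delta\}\Subset\Omega$. On $\omega$ Theorem~\ref{theorem1.1} already yields the desired bound, so the whole issue is to estimate $\|V-\tilde V\|_{L^\infty(\mathscr N)}$.

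The starting point is the elementary identity $(V-\tilde V)\tilde u^2=(g^2-\tilde g^2)-Vy(u+\tilde u)$, obtained by subtracting $\tilde g^2=\tilde V\tilde u^2$ from $g^2=Vu^2$ and factoring $u^2-\tilde u^2=y(u+\tilde u)$. Together with the difference equation $\Delta y+Vy+(V-\tilde V)\tilde u=0$ in $\Omega$, $y=0$ on $\Gamma$, this identity lets me control the source $G:=-(V-\tilde V)\tilde u$: dividing by $\tilde u$ (legitimate on $\mathscr N$ since $|\tilde u|\ge\kappa/2$ there) gives $|G|\le C(|y|+|g-\tilde g|)$ on $\mathscr N$, while on $\omega$ I simply use $|G|\le C|V-\tilde V|$ and invoke Theorem~\ref{theorem1.1}.

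The first main step is a global estimate for $y$. By the uniform well-posedness of $A_V$ on $\mathscr{D}_0(k,v_0,\overline V)$ and elliptic regularity, $\|y\|_{H^2(\Omega)}\le C\|G\|_{L^2(\Omega)}$. Splitting $\|G\|_{L^2(\Omega)}^2$ over $\omega$ and $\mathscr N$ and using the two bounds on $G$ gives
\[
\|y\|_{H^2(\Omega)}^2\le C\Big(\|V-\tilde V\|_{L^\infty(\omega)}^2+\|g-\tilde g\|_{L^2(\Omega)}^2+\|y\|_{L^2(\mathscr N)}^2\Big).
\]
Here lies the main obstacle: the term $\|y\|_{L^2(\mathscr N)}^2$ is a priori of the same size as $\|y\|_{H^2(\Omega)}^2$ and cannot be absorbed directly. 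I resolve it by exploiting that $\mathscr N$ is a thin collar on which $y$ vanishes on $\Gamma$: a Poincar\'e inequality in the normal direction gives $\|y\|_{L^2(\mathscr N)}\le C\delta\|\nabla y\|_{L^2(\mathscr N)}\le C\delta\|y\|_{H^2(\Omega)}$. Choosing $\delta$ small enough (compatibly with the non-vanishing requirement, both thresholds depending only on the a priori data) lets me absorb this term and obtain, via Theorem~\ref{theorem1.1}, $\|y\|_{H^2(\Omega)}\le C\|g-\tilde g\|_{H^1(\Omega)}^{\mu}$.

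The final step returns to the identity on $\mathscr N$. Dividing by $\tilde u^2$ yields $\|V-\tilde V\|_{L^\infty(\mathscr N)}\le C\big(\|y\|_{L^\infty(\mathscr N)}+\|g^2-\tilde g^2\|_{L^\infty(\mathscr N)}\big)$. Since $n\le 3$, one has $H^2(\Omega)\hookrightarrow L^\infty(\Omega)$, so $\|y\|_{L^\infty}\le C\|y\|_{H^2}\le C\|g-\tilde g\|_{H^1}^\mu$; and because $g,\tilde g$ are uniformly Lipschitz on $\overline\Omega$ (as $V\in C^{0,1}$ with $V\ge v_0$ and $u_V\in C^{1}$, uniformly), a Gagliardo--Nirenberg interpolation gives $\|g^2-\tilde g^2\|_{L^\infty}\le C\|g-\tilde g\|_{L^\infty}\le C\|g-\tilde g\|_{H^1}^{\theta}$. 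Combining the bounds on $\omega$ and on $\mathscr N$ produces $\|V-\tilde V\|_{L^\infty(\Omega)}\le C\|g-\tilde g\|_{H^1(\Omega)}^{\mu'}$ with $\mu'=\min(\mu,\theta)$, which is the assertion. The only delicate points are the uniformity of all constants over $\mathscr{D}_1(k,v_0,\overline V,K)$ --- which follows from \eqref{1} and the uniform invertibility built into the definition of $\mathscr{D}_0$ --- and the simultaneous choice of the collar width $\delta$.
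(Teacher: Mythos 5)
Your proof is correct (the standard facts you invoke are used accurately), but it follows a genuinely different route from the paper's. The paper's proof of Theorem \ref{theorem1.2} is one line: under $|h|>\kappa>0$ on $\Gamma$, each $u_V$ belongs to the class $\mathscr{S}_s(v_0,V_0,\kappa,M)$, so part (2) of Theorem \ref{theorem-wii} --- the weighted interpolation inequality \eqref{w1} on all of $\Omega$, rather than the interior version \eqref{w1i} --- can be substituted for part (1) in the proof of Theorem \ref{theorem1.1}, whose algebraic identities and the weighted estimate \eqref{[2]} from \cite{CT} are then reused verbatim. There the boundary difficulty is absorbed into the functional inequality itself: the lower bound $|u_V|\ge\kappa/2$ on a collar gives $\int_{B(y,r)\cap\Omega}u_V^2\,dx\ge c\,r^n$ near $\Gamma$, which replaces the interior doubling estimate of Proposition \ref{proposition-a2}. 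You instead keep Theorem \ref{theorem1.1} as a black box on $\omega=\{\mathrm{dist}(\cdot,\Gamma)>\delta\}$ and do the boundary work at the level of the PDE: the difference $y=u_V-u_{\tilde{V}}$ solves $\Delta y+Vy=-(V-\tilde{V})u_{\tilde{V}}$ with $y=0$ on $\Gamma$, the uniform invertibility of $A_V$ (indeed built into the definition of $\mathscr{D}_0$) gives $\|y\|_{H^2(\Omega)}\le C\|G\|_{L^2(\Omega)}$, the collar contribution of the source is absorbed via the thin-collar Poincar\'e inequality, and sup norms are recovered from Sobolev embedding ($n\le 3$) together with a Lipschitz--$L^2$ interpolation inequality $\|f\|_{L^\infty(\Omega)}\le C\|f\|_{C^{0,1}(\overline{\Omega})}^{1-\theta}\|f\|_{L^2(\Omega)}^{\theta}$ --- the same kind of inequality as \cite[Lemma B.1]{CK}, which the paper itself uses in the proof of Theorem \ref{theorem1.3}. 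Your choice of constants is non-circular, since $\delta$ is fixed by the non-vanishing threshold, the tubular-neighbourhood requirement, and the absorption condition, none of which involve the constants of Theorem \ref{theorem1.1}; the points you leave implicit (uniformity of the collar Poincar\'e constant for $C^2$ domains for small $\delta$, and the reduction to the case where $\|I_V^{1/2}-I_{\tilde{V}}^{1/2}\|_{H^1(\Omega)}$ is a priori bounded when combining different H\"older exponents) are standard. As for what each approach buys: the paper's route produces a reusable functional inequality, uniform over the whole class of weights and valid up to the boundary, and never needs elliptic regularity for the difference equation; your route avoids extending the frequency-function/doubling machinery into the boundary layer, at the price of being tied to the specific structure of the inverse problem (both equations, the zero Dirichlet data of $y$, and the resolvent bound for $A_V$).
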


\begin{theorem}\label{theorem1.2b} Let $\tilde{\Omega}\subset  \Omega$ be a  $C^2$-smooth domain with boundary 
$\tilde{\Gamma}$ such that $\tilde{\gamma}:=\tilde{\Gamma} \cap \Gamma$ satisfies $\ring{\tilde{\gamma}} \ne\o$, and
let $\omega \Subset \tilde{\Omega}$. 
Under the assumption  $|h|>\kappa >0$ on $\Gamma$, there exist two constants $C=C\left(\Omega ,p,v_0, k ,\overline{V}, h ,K, \omega, \tilde{\Omega} \right )>0$ and $\mu =\mu 
\left(\Omega ,p,v_0, k ,\overline{V}, h ,K, \omega,  \tilde{\Omega} \right )$ so that, for any $V,\tilde{V} \in \mathscr{D}_1(k,v_0,\overline{V},K)$ 
satisfying $V=\tilde{V}$ on $\Gamma$ and  $\nabla V=  \nabla \tilde{V}$ on $\tilde{\gamma}$, we have
\[
\| V-\tilde{V}\|_{L^\infty (\omega )} \le C\left\| I_V^{1/2}-I_{\tilde{V}}^{1/2}\right\|_{H^1(\tilde{\Omega} )}^\mu.
\]
\end{theorem}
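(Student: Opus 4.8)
The plan is to transfer the estimate from the internal data to the difference of the two solutions and then to close the loop through a quantitative unique continuation argument localized to $\tilde{\Omega}$. Write $u=u_V$, $\tilde{u}=u_{\tilde{V}}$, $g=I_V^{1/2}-I_{\tilde{V}}^{1/2}$ and $w=u-\tilde{u}$. Since $|h|>\kappa$ on $\Gamma$, the argument underlying Theorem \ref{theorem1.2} shows that $u$ and $\tilde{u}$ do not vanish and may be taken positive on $\overline{\Omega}$, so that $I_V^{1/2}=\sqrt{V}\,u$ and $I_{\tilde{V}}^{1/2}=\sqrt{\tilde{V}}\,\tilde{u}$. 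Subtracting the two Helmholtz equations gives $\Delta w+Vw=(\tilde{V}-V)\tilde{u}$ with $w=0$ on $\Gamma$, while $g=\sqrt{V}\,w+(\sqrt{V}-\sqrt{\tilde{V}})\tilde{u}$. Eliminating $(\tilde{V}-V)\tilde{u}=-(\sqrt{V}+\sqrt{\tilde{V}})(g-\sqrt{V}\,w)$ from the first identity, the zeroth order terms telescope ($V-\sqrt{V}(\sqrt{V}+\sqrt{\tilde{V}})=-\sqrt{V\tilde{V}}$) and I obtain the clean equation
\[
\Delta w-\sqrt{V\tilde{V}}\,w=-(\sqrt{V}+\sqrt{\tilde{V}})\,g\quad\text{in }\tilde{\Omega}.
\]
This is the heart of the reduction: $w$ solves an elliptic equation $\Delta w-qw=f$ with coercive coefficient $q=\sqrt{V\tilde{V}}\ge v_0>0$ and right-hand side controlled by the data, $\|f\|_{L^2(\tilde{\Omega})}\le C\|g\|_{H^1(\tilde{\Omega})}$. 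Moreover, from $g=\sqrt{V}\,w+(\sqrt{V}-\sqrt{\tilde{V}})\tilde{u}$ together with $\tilde{u}\ge c_0>0$ and the boundedness of $\sqrt{V}+\sqrt{\tilde{V}}$ one gets the pointwise bound $|V-\tilde{V}|\le C(|g|+|w|)$, which reduces everything to estimating $\|w\|_{L^\infty(\omega)}$.

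Next I would extract the Cauchy data of $w$ on $\tilde{\gamma}$ from the matching hypotheses. On $\tilde{\gamma}\subset\Gamma$ we have $u=\tilde{u}=h$, hence $w=0$; and since $V=\tilde{V}$ and $\nabla V=\nabla\tilde{V}$ there, differentiating $g=\sqrt{V}\,u-\sqrt{\tilde{V}}\,\tilde{u}$ and using $\sqrt{V}=\sqrt{\tilde{V}}$, $\nabla\sqrt{V}=\nabla\sqrt{\tilde{V}}$ on $\tilde{\gamma}$ yields $\nabla g=\sqrt{V}\,\nabla w$ on $\tilde{\gamma}$, so that $\partial_\nu w=\partial_\nu g/\sqrt{V}$. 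Thus both components of the Cauchy data of $w$ on $\tilde{\gamma}$ are controlled by $g$ and its gradient; this is precisely where the condition $\nabla V=\nabla\tilde{V}$ on $\tilde{\gamma}$ enters. Since $g$ obeys an a priori bound in $W^{1,p}(\tilde{\Omega})$ (coming from \eqref{1} and $\|V\|_{C^{0,1}(\overline{\Omega})}\le K$) while only $\|g\|_{H^1(\tilde{\Omega})}$ is assumed small, these traces will be estimated by a power $\|g\|_{H^1(\tilde{\Omega})}^{\theta}$ after a standard interpolation.

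The main step, and the one I expect to be hardest, is a quantitative unique continuation estimate on $\tilde{\Omega}$: for a solution of $\Delta w-qw=f$ that is a priori bounded (by \eqref{1}, $\|w\|_{W^{2,p}(\Omega)}\le C$, hence $\|w\|_{L^\infty}\le C$), with small Cauchy data on the open piece $\ring{\tilde{\gamma}}$ and small source $f$, one must propagate the smallness into the interior set $\omega\Subset\tilde{\Omega}$. I would subtract the solution $w_0$ of the well-posed Dirichlet problem $\Delta w_0-qw_0=f$ in $\tilde{\Omega}$, $w_0=0$ on $\tilde{\Gamma}$, for which $\|w_0\|_{H^2(\tilde{\Omega})}\le C\|f\|_{L^2(\tilde{\Omega})}\le C\|g\|_{H^1(\tilde{\Omega})}$; then $W:=w-w_0$ solves the homogeneous equation $\Delta W-qW=0$ in $\tilde{\Omega}$ with Cauchy data on $\tilde{\gamma}$ still controlled by $\|g\|_{H^1(\tilde{\Omega})}$. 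Applying a boundary Carleman estimate anchored on $\ring{\tilde{\gamma}}$ to start the propagation, followed by interior three-sphere inequalities to reach $\omega$, and using the a priori bound to upgrade the resulting conditional (logarithmic) estimate to a Hölder one on the compactly contained set $\omega$, gives
\[
\|w\|_{L^\infty(\omega)}\le\|W\|_{L^\infty(\omega)}+\|w_0\|_{L^\infty(\omega)}\le C\|g\|_{H^1(\tilde{\Omega})}^{\mu}.
\]
The delicate points are the geometry (one needs $\ring{\tilde{\gamma}}\ne\o$ and the $C^2$ regularity of $\tilde{\Omega}$ to set up the weight) and the passage from weak to Hölder stability, exactly as in the proof of Theorem \ref{theorem1.1}.

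Finally I would combine the pieces. The pointwise inequality $|V-\tilde{V}|\le C(|g|+|w|)$ on $\omega$ gives $\|V-\tilde{V}\|_{L^\infty(\omega)}\le C\left(\|g\|_{L^\infty(\omega)}+\|w\|_{L^\infty(\omega)}\right)$. The second term is Hölder-bounded by the previous step; for the first, since $H^1$ does not embed in $L^\infty$ for $n=2,3$, I would interpolate the measured quantity $\|g\|_{H^1(\tilde{\Omega})}$ against the a priori bound $\|g\|_{W^{1,p}(\tilde{\Omega})}\le C$ (Gagliardo–Nirenberg) to obtain $\|g\|_{L^\infty(\omega)}\le C\|g\|_{H^1(\tilde{\Omega})}^{\theta}$. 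Taking the smaller of the two exponents yields the announced Hölder estimate $\|V-\tilde{V}\|_{L^\infty(\omega)}\le C\left\|I_V^{1/2}-I_{\tilde{V}}^{1/2}\right\|_{H^1(\tilde{\Omega})}^{\mu}$.
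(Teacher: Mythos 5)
Your proposal breaks down at its opening claim, and the damage propagates through every later step. You assert that, because $|h|>\kappa$ on $\Gamma$, ``the argument underlying Theorem \ref{theorem1.2} shows that $u$ and $\tilde u$ do not vanish and may be taken positive on $\overline\Omega$''. This is false: since $V\ge v_0>0$, the operator $\Delta+V$ satisfies no maximum principle, and the argument in question (part (2) of the proof of Theorem \ref{theorem-wii}) only yields $|u|\ge \kappa/2$ in a thin boundary layer $\Omega_{\delta_1}$, never in the interior. Concretely, for $\Omega$ a ball of radius $R$, $V=\tilde V=\lambda$ a large constant outside $\sigma(A_0)$, and $h=1$, the solution is a radial Bessel-type function which oscillates and has interior nodal spheres. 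Consequently $I_V^{1/2}=\sqrt V\,|u|$, not $\sqrt V\,u$; your identity $g=\sqrt V\,w+(\sqrt V-\sqrt{\tilde V})\,\tilde u$ and the resulting equation $\Delta w-\sqrt{V\tilde V}\,w=-(\sqrt V+\sqrt{\tilde V})\,g$ hold only where $u$ and $\tilde u$ are both positive, and they change form across the nodal sets, which (as in the radial example) can separate $\omega$ from $\tilde\gamma$, so the Carleman/three-sphere propagation cannot be run on $\tilde\Omega$. Worse, your final pointwise bound $|V-\tilde V|\le C(|g|+|w|)$ requires $\tilde u\ge c_0>0$ on $\omega$; near an interior zero of $\tilde u$, smallness of $g$ and $w$ carries no pointwise information on $V-\tilde V$. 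This degeneracy of the weight $u_V^2$ is the central difficulty of the whole paper, and it is exactly what the frequency-function machinery of Section 2 and Appendix A (doubling, Theorem \ref{theorem2.1}, Proposition \ref{proposition-a2}) is built to overcome; your outline has no substitute for it.

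For comparison, the paper keeps your two sound ideas --- differencing two elliptic equations and invoking H\"older stability for a Cauchy problem on $\tilde\Omega$ with data on $\tilde\gamma$ --- but applies them to an unknown that is insensitive to interior sign changes. With $J=I_V^{1/2}$ and $\theta=V^{-1/2}$ (so that $J\theta=|u|$), one has $\Delta(J\theta)=-J/\theta$ (justified via \cite{Ro}, since the zero set of $u_V$ has measure zero), and differencing with the analogous equation for $(\tilde J,\tilde\theta)$ yields an elliptic equation for the weighted potential difference $J(\theta-\tilde\theta)$, which is comparable to $I_V^{1/2}(V-\tilde V)$, with source controlled by $\tilde J-J$ and with zero Cauchy data on $\tilde\gamma$ --- this is precisely where $V=\tilde V$ on $\Gamma$ and $\nabla V=\nabla\tilde V$ on $\tilde\gamma$ enter. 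Then \cite[Theorem 1.7]{ARRV} gives the weighted estimate \eqref{wei} of Lemma \ref{weight}, and the passage from \eqref{wei} to $\|V-\tilde V\|_{L^\infty(\omega)}$ is carried out by the weighted interpolation inequality, part (1) of Theorem \ref{theorem-wii}, rather than by dividing by $\tilde u$. To salvage your plan you would have to (i) replace $u-\tilde u$ by a quantity like $J(\theta-\tilde\theta)$ whose equation survives the nodal sets, and (ii) replace your last step by the weighted interpolation argument; at that point you would have reproduced the paper's proof.
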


In the following result we allow the Dirichlet boundary condition  $h$  to vanish on $\Gamma$. Denote
\[
\Gamma_+=\{x\in \Gamma ;\; |h(x)|>0\}\quad \mbox{and}\quad \Gamma_0=\{ x\in \Gamma ;\; h(x)=0\}.
\]

\begin{theorem}\label{theorem1.3}
Let $\gamma$ be a compact subset of $\Gamma_+\cup \ring{\Gamma}_0$ and $\omega \Subset \Omega\cup \gamma$. Then, there exist two constants $C=C\left(\Omega ,p,v_0, k ,\overline{V}, h ,K ,\omega \right )>0$ and $\mu =\mu \left(\Omega ,p,v_0, k ,\overline{V}, h ,K ,\omega \right )$ so that, for any $V,\tilde{V} \in \mathscr{D}_1(k,v_0,\overline{V},K)$ satisfying $V= \tilde V$ on $\Gamma$, we have
\[
\| V-\tilde{V}\|_{L^\infty (\omega )} \le C\left\| I_V^{1/2}-I_{\tilde{V}}^{1/2}\right\|_{H^1(\Omega )}^\mu.
\]
\end{theorem}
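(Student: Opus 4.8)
The plan is to combine the interior H\"older stability of Theorem \ref{theorem1.1} with a boundary propagation-of-smallness argument that carries the estimate up to the boundary piece $\gamma$. Throughout set $u=u_V$, $\tilde u=u_{\tilde V}$, $U=u-\tilde u$, $W=V-\tilde V$ and abbreviate the data by $\eta=\|I_V^{1/2}-I_{\tilde V}^{1/2}\|_{H^1(\Omega )}$. Since $u_V,u_{\tilde V}$ are uniformly bounded in $W^{2,p}(\Omega )\hookrightarrow C^1(\overline\Omega )$ by \eqref{1} and $V,\tilde V\ge v_0>0$ are Lipschitz, the square roots $I_V^{1/2}=\sqrt V\,|u|$ and $I_{\tilde V}^{1/2}=\sqrt{\tilde V}\,|\tilde u|$ lie in $W^{1,\infty}(\Omega )$ with uniform bounds; hence, by boundedness of the multiplication $W^{1,\infty}\times H^1\to H^1$,
\[
\|I_V-I_{\tilde V}\|_{H^1(\Omega )}=\|(I_V^{1/2}+I_{\tilde V}^{1/2})(I_V^{1/2}-I_{\tilde V}^{1/2})\|_{H^1(\Omega )}\le C\eta .
\]
The purely algebraic identity
\[
W\,u^2=(I_V-I_{\tilde V})-\tilde V\,(u+\tilde u)\,U
\]
then shows that on any subregion where $|u|$ is bounded below, $\|W\|_{L^\infty}$ is controlled by $\eta$ together with $\|U\|$. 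This is the mechanism behind every estimate, and the whole difficulty is to control $U$ and to handle the zeros of $u$.

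First I would record that $U$ solves the homogeneous Dirichlet problem $\Delta U+VU=-W\tilde u$ in $\Omega$, $U=0$ on $\Gamma$, with the extra matching $V=\tilde V$ on $\Gamma$. Theorem \ref{theorem1.1} already supplies, for every $\omega'\Subset\Omega$, a bound $\|W\|_{L^\infty(\omega')}\le C\eta^{\mu'}$, so it remains only to propagate this interior smallness up to the part of $\gamma$ met by $\overline\omega$ (recall $\omega\Subset\Omega\cup\gamma$ forces $\overline\omega\cap\Gamma\subset\gamma$). The tool is a quantitative unique continuation statement, i.e.\ a three-ball / interpolation inequality derived from a Carleman estimate for $\Delta+V$, applied along chains of balls reaching $\overline\omega\cap\Gamma$.

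Next I would split $\gamma$ into its two admissible types. Near $x_0\in\gamma\cap\Gamma_+$ the datum $h$ is bounded away from zero, so $u,\tilde u$ are bounded below in a half-ball $B(x_0,r)\cap\Omega$; there the displayed identity, together with $U=0$ and $W=0$ on $\Gamma$, feeds a boundary Carleman/energy estimate giving $\|W\|_{L^\infty(B(x_0,r/2)\cap\Omega )}\le C\eta^{\mu''}$. Near $x_0\in\gamma\cap\ring{\Gamma}_0$ one instead has $h\equiv0$ on an open piece of $\Gamma$, hence $u=\tilde u=0$ and $I_V=I_{\tilde V}=0$ there; here I would use a boundary Carleman estimate with weight singular at $x_0$ to transport the interior smallness of $U$ (and of $W u^2$) through the degenerate layer up to $x_0$, exploiting the vanishing Dirichlet data on $\ring{\Gamma}_0$ and the fact that $\tilde u$ is small near it. Composing these local boundary estimates with the interior estimate over a finite covering of the compact set $\overline\omega$ and taking the worst H\"older exponent yields the claimed bound on $\|W\|_{L^\infty(\omega )}$ with a single $\mu$.

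The hard part will be the propagation through $\ring{\Gamma}_0$: there both $u$ and the data vanish, so $W=(I_V-I_{\tilde V}-\tilde V(u+\tilde u)U)/u^2$ is a $0/0$ expression that cannot be read off directly, and the source $W\tilde u$ in the equation for $U$ is not a priori small. I expect to absorb this source by passing, wherever $u,\tilde u$ keep a constant sign, to the logarithmic variable $\psi=\log(u/\tilde u)$, which satisfies $\Delta\psi+\nabla\log(u\tilde u)\cdot\nabla\psi=-W$ and, after expressing $\psi$ through $\tfrac12\log(I_V/I_{\tilde V})-\tfrac12\log(V/\tilde V)$, turns into an elliptic equation for $W$ whose zeroth-order coefficient carries the favorable sign $-\tilde V\le-v_0<0$; this allows an energy/Carleman absorption rather than a mere perturbation bound. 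The remaining delicate point is to choose all Carleman weights uniformly over $V,\tilde V\in\mathscr D_1(k,v_0,\overline V,K)$, so that $C$ and $\mu$ depend only on the stated parameters.
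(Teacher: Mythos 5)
Your overall architecture (split $\gamma$ into $\gamma_+=\gamma\cap\Gamma_+$ and $\gamma_0=\gamma\cap\ring{\Gamma}_0$, treat the interior by Theorem \ref{theorem1.1}, patch with local boundary estimates, take the worst exponent) matches the paper's proof, and your treatment of the interior and of $\gamma_+$ is workable in spirit, though heavier than necessary: once $|u_V|$ is bounded below near $\gamma_+$, no Carleman estimate is needed --- a direct interpolation between $\|(V-\tilde V)u_V^2\|_{L^1}$ and the a priori $C^{0,1}$ bound $K$ on $V-\tilde V$, as in \cite[Lemma B.1]{CK}, already gives the local $L^\infty$ bound, and $\|(V-\tilde V)u_V^2\|_{L^1(\Omega)}$ is controlled by the data exactly as in the proof of Theorem \ref{theorem1.1}. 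The genuine gap is at $\gamma_0$. Your plan there is to propagate smallness of $U=u-\tilde u$ up to the boundary by Carleman estimates; but even perfect control of $U$ and of $I_V-I_{\tilde V}$ up to $\ring{\Gamma}_0$ yields nothing for $W=V-\tilde V$, since recovering $W$ from the identity $Wu^2=(I_V-I_{\tilde V})-\tilde V(u+\tilde u)U$ forces a division by $u^2$, which vanishes on $\Gamma_0$ --- this is exactly the $0/0$ obstruction you name, and propagation of smallness of $U$ cannot resolve it. What is needed, and what your proposal never produces, is a quantitative lower bound on the rate of vanishing of $u_V$ near $\ring{\Gamma}_0$, uniform over the class. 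This is precisely the ingredient the paper imports: by the proof of \cite[Proposition 3.1]{ACT} there exist $\delta>0$ and a neighborhood $\mathcal{U}_0$ of $\gamma_0$ in $\omega\cup\Gamma_0$ with $|u_V|^{-\delta}\in L^1(\mathcal{U}_0)$; then \cite[Lemma 1.3]{ACT} converts this integrability into $\|V-\tilde V\|_{L^2(\mathcal{U}_0)}\le C\|(V-\tilde V)u_V^2\|_{L^1(\mathcal{U}_0)}^{\delta/(2+\delta)}$, and \cite[Lemma B.1]{CK} upgrades this to $L^\infty$ using the Lipschitz bound.

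Your fallback, the logarithmic variable $\psi=\log(u/\tilde u)$, does not repair the gap. First, the drift $\nabla\log(u\tilde u)$ in your equation blows up exactly where the estimate is needed (near $\Gamma_0$, where $u,\tilde u\to 0$, and on any interior nodal set), so Carleman or energy estimates with constants uniform over $\mathscr{D}_1(k,v_0,\overline V,K)$ are unavailable there. Second, writing $\psi=\tfrac12\log(I_V/I_{\tilde V})-\tfrac12\log(V/\tilde V)$ and substituting puts $\log(V/\tilde V)$ under the Laplacian, i.e.\ asks for two derivatives of potentials that are only $C^{0,1}$; and even in a weak formulation the ``data'' side now enters through derivatives of $\log(I_V/I_{\tilde V})$, which involve division by $I_{\tilde V}\to 0$ near $\Gamma_0$, so smallness of $\|I_V^{1/2}-I_{\tilde V}^{1/2}\|_{H^1(\Omega)}$ is destroyed precisely in the region you are trying to reach. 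Third, the constant-sign hypothesis on $u,\tilde u$ near $\ring{\Gamma}_0$ is not guaranteed: $\Delta+V$ with $V\ge v_0>0$ obeys no maximum principle, and $u_V$ may vanish to high order or change sign arbitrarily close to $\Gamma_0$. So the $\gamma_0$ part of your argument fails as written; it must be replaced by a boundary vanishing-rate (doubling/frequency-type) estimate of the kind quoted from \cite{ACT}.
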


We deduce from Theorem \ref{theorem1.2b}   that it is possible to recover the potential $V$ on a small  set $\omega$
of $\Omega$ if the medium is probed starting  from  $\tilde{\gamma}$,  the part of the boundary   where $h$ is intense, and
by covering a neighboring  region $\tilde{\Omega}$ that contains $\omega$. Unfortunately   $h$ may  in general settings
 be zero on some  parts of the boundary $\Gamma$, and Theorem \ref{theorem1.3}  is an attempt to 
 improve the results of Theorems \ref{theorem1.2} and   \ref{theorem1.2b} in this direction. 

In \cite{ADFV}, the authors were able to prove a lower bound for the gradient of solutions near the boundary when the boundary data is ``qualitatively unimodal'' (see \cite{ADFV} for the definition). Roughly speaking, the key in their proof is that even if the tangential gradient of solutions vanishes, there is, according to Hopf's maximum principle, a non zero contribution of the derivative of solutions in the normal direction. Unfortunately, there is no similar arguments that can be used in order to get lower bound for solutions on the boundary. 

The rest of this text consists in two sections and an appendix. We establish in Section 2 weighted interpolation inequalities that are our main ingredient for proving stability estimates. That is what we do in Section 3. We end by an appendix that is devoted to prove a technical result we used in Section 2. This result which is essential in our analysis gives a lower bound of the $L^2$-norm of solutions on a small ball, away from the boundary, in term of the radius of the ball.

\section{Weighted interpolation inequalities}

We start with some preliminaries involving the so-called frequency function. If $\mathbf{d}$ is the diameter of $\Omega$ with respect to the euclidean metric and $0<\delta <\mathbf{d}$, let
\[
\Omega ^\delta =\{x\in \Omega ;\; \mbox{dist}(x,\Gamma )\ge \delta\}
\quad \mbox{and}\quad
\Omega _\delta =\{x\in \Omega ;\; \mbox{dist}(x,\Gamma )\le \delta\}.
\]

For $0<v_0\le V_0$ and $M>0$, set  $\mathscr{V}(v_0,V_0)=\{V\in L^\infty (\Omega );\; v_0\le V\le V_0\}$. Define then
\[
\mathscr{S}(v_0,V_0)=\{ u\in H^2(\Omega );\; \; \Delta u+Vu=0,\; \mbox{for some}\; V\in \mathscr{V}(v_0,V_0)\}.
\]
Let  $u\in \mathscr{S}(v_0,V_0)$ and $x_0\in \Omega ^\delta$. We define, for $0<r<\delta$,
\begin{align*}
&H_u(x_0,r)=\int_{S(x_0,r)} u^2(x)dS(x),
\\
&D_u(x_0,r)=\int_{B(x_0,r)}\left\{|\nabla u(x)|^2+V(x)u^2(x)\right\}dx,
\\
&K_u(x_0,r)=\int_{B(x_0,r)} u^2(x)dS(x).
\end{align*}
Here $S(x_0,r)$ is the sphere of centrer $x_0$ and radius $r$.

Henceforth, the first Dirichlet eingenvalue of the Laplace operator in the domain $D$ is denoted by $\lambda_1(D)$.

Prior to define the frequency function, we need to prove the following lemma, where 
\begin{equation}\label{d1}
\rho_0=\sqrt{\lambda_1(B(0,1))/V_0}.
\end{equation}

\begin{lemma}\label{lemma+1}
Let $\rho_0$ be as in \eqref{d1}. Then, for any $u\in \mathscr{S}(v_0,V_0)$, $u\ne 0$, $x_0\in \Omega ^\delta$ and $0<r<\min(\delta ,\rho_0)$, we have $H_u(x_0,r)>0$.
\end{lemma}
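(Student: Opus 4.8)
The plan is to argue by contradiction: I assume that $H_u(x_0,r)=0$ for some admissible $r$ and show that this forces $u\equiv 0$ in $\Omega$, contrary to the hypothesis $u\ne 0$. Since $n\in\{2,3\}$, the Sobolev embedding gives $u\in H^2(\Omega)\hookrightarrow C(\overline{\Omega})$, so the vanishing of $H_u(x_0,r)=\int_{S(x_0,r)}u^2\,dS$ means that the nonnegative continuous integrand $u^2$ is identically zero on the sphere $S(x_0,r)$, i.e. $u$ vanishes on $S(x_0,r)$. Because $r<\delta$ and $x_0\in\Omega^\delta$, every point of $\overline{B(x_0,r)}$ lies at distance at least $\delta-r>0$ from $\Gamma$, so $\overline{B(x_0,r)}\subset\Omega$ and the restriction $u|_{B(x_0,r)}$ belongs to $H^2(B(x_0,r))$ with zero trace on $S(x_0,r)$; hence $u|_{B(x_0,r)}\in H_0^1(B(x_0,r))$.

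Next I would distinguish two cases. Suppose first that $u\not\equiv 0$ on $B(x_0,r)$. Integrating the equation $\Delta u+Vu=0$ against $u$ over $B(x_0,r)$ and using that the boundary term drops out by the zero trace, I obtain
\[
\int_{B(x_0,r)}|\nabla u|^2\,dx=\int_{B(x_0,r)}Vu^2\,dx\le V_0\int_{B(x_0,r)}u^2\,dx,
\]
where the inequality uses $V\le V_0$. The variational characterization of the first Dirichlet eigenvalue then yields $\lambda_1(B(x_0,r))\le V_0$. On the other hand, rescaling the first Dirichlet eigenfunction of the unit ball gives the scaling law $\lambda_1(B(x_0,r))=\lambda_1(B(0,1))/r^2$, and since $r<\rho_0$ the definition \eqref{d1} of $\rho_0$ forces $\lambda_1(B(x_0,r))=\lambda_1(B(0,1))/r^2>\lambda_1(B(0,1))/\rho_0^2=V_0$. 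This contradicts the bound $\lambda_1(B(x_0,r))\le V_0$.

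Suppose instead that $u\equiv 0$ on $B(x_0,r)$. Then $u$ solves the elliptic equation $\Delta u+Vu=0$ with $V\in L^\infty(\Omega)$ and vanishes on a nonempty open subset of the connected set $\Omega$; the unique continuation property for the Helmholtz operator with bounded potential forces $u\equiv 0$ throughout $\Omega$, again contradicting $u\ne 0$. Since both cases are impossible, we conclude $H_u(x_0,r)>0$.

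The only genuinely delicate ingredient is the appeal to unique continuation for $\Delta+V$ with $V\in L^\infty$; everything else is an elementary pairing of the Rayleigh quotient bound with the scaling law for $\lambda_1$, and the choice $\rho_0=\sqrt{\lambda_1(B(0,1))/V_0}$ is precisely what renders these two estimates incompatible when $r<\rho_0$. I expect the main point requiring care in the write-up to be the clean justification that $u|_{B(x_0,r)}\in H_0^1(B(x_0,r))$ (via continuity of $u$ together with $H_u(x_0,r)=0$), since the rest of the argument then proceeds automatically.
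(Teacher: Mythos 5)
Your proof is correct and follows essentially the same route as the paper's: contradiction, Green's formula on $B(x_0,r)$ using the zero trace, the scaling law $\lambda_1(B(x_0,r))=\lambda_1(B(0,1))/r^2$ against the bound $V_0$ (the paper phrases this as Poincar\'e's inequality rather than the Rayleigh-quotient characterization, which is the same estimate), and unique continuation to reach $u\equiv 0$. Your split into the cases $u\not\equiv 0$ and $u\equiv 0$ on the ball is only a cosmetic reorganization of the paper's single-inequality argument, so there is nothing substantive to distinguish the two.
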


\begin{proof}
We proceed by contradiction. Pick then $u\in \mathscr{S}(v_0,V_0)$, $u\ne 0$, $x_0\in \Omega ^\delta$ and assume then that $H_u(x_0,r)=0$ for some $0<r<\min(\delta ,\rho_0)$. That is, $u=0$ on $S(x_0,r)$. Therefore Green's formula gives
\begin{equation}\label{+1}
\int_{B(x_0,r)}|\nabla u|^2dx=\int_{B(x_0,r)}Vu^2dx\le V_0\int_{B(x_0,r)}u^2.
\end{equation}
On the other hand, bearing in mind that $\lambda_1(B(x_0,r))=\lambda_1(B(0,1))/r^2$, Poincar\'e's inequality yields
\begin{align}
\int_{B(x_0,r)}u^2dx&\le \frac{1}{\lambda_1(B(x_0,r))}\int_{B(x_0,r)}|\nabla u|^2dx\label{+2}
\\
&\le \frac{r^2}{\lambda_1(B(0,1))}\int_{B(x_0,r)}|\nabla u|^2dx.\nonumber
\end{align}
Note that we used here that $u\in H_0^1(B(x_0,r))$.

Inequality \eqref{+2} in \eqref{+1} produce
\[
\left( 1-\frac{r^2V_0}{\lambda_1(B(0,1))}\right)\int_{B(x_0,r)}|\nabla u|^2dx\le 0.
\]
As $1-r^2V_0/\lambda_1(B(0,1))>0$, we conclude that $u=0$ in $B(x_0,r)$ and hence $u$ is identically equal to zero by the unique continuation property. This leads to the expected contradiction.
\end{proof}

According to Lemma \ref{lemma+1}, if $u\in \mathscr{S}(v_0,V_0)$, $u\ne 0$, we can define the frequency function $N_u$, corresponding to $u$, by
\[
N_u(x_0,r)=\frac{rD_u(x_0,r)}{H_u(x_0,r)}.
\]

The following two lemmas can be deduced from the calculations developed in \cite{GL1, GL2} (see also \cite{Ch1}).
\begin{lemma}\label{lemma-a3}
For $u\in \mathscr{S}(v_0,V_0)$, $u\ne 0$ and $x_0\in \Omega ^\delta$, we have
\[
K_u(x_0,r)\le rH_u(x_0,r),\quad  0<r<\delta_0=\min (\rho_0,\rho_1,\delta ) ,
\]
where $\rho_0$ is as in \eqref{d1} and $\rho_1=\sqrt{(n-1)/V_0}$.
\end{lemma}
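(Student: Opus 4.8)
The plan is to reduce the claimed bound to a monotonicity-type statement for the spherical average $H_u$ and to establish the latter through a first-order differential inequality in the radius, along the lines of the frequency-function computations of \cite{GL1, GL2, Ch1}.

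I would first record the two elementary facts that make the reduction work. By the coarea formula one has $K_u(x_0,r)=\int_0^r H_u(x_0,s)\,ds$; and, differentiating $H_u(x_0,r)=r^{n-1}\int_{|\theta|=1}u^2(x_0+r\theta)\,d\sigma(\theta)$ and applying Green's formula together with the equation $\Delta u=-Vu$, one obtains
\[
H_u'(x_0,r)=\frac{n-1}{r}H_u(x_0,r)+2\int_{B(x_0,r)}\left(|\nabla u|^2-Vu^2\right)dx.
\]
Both identities are valid throughout $0<r<\delta_0$, where Lemma \ref{lemma+1} guarantees $H_u(x_0,r)>0$, so all the quantities involved are well defined and strictly positive.

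Next I would introduce the defect $\phi(r)=rH_u(x_0,r)-K_u(x_0,r)$, whose nonnegativity is exactly the assertion to be proved, and which satisfies $\phi(0^{+})=0$ and $\phi'(r)=rH_u'(x_0,r)$ (using $\frac{d}{dr}K_u=H_u$). Inserting the formula for $H_u'$, discarding the nonnegative gradient energy and using $V\le V_0$, I would get
\[
\phi'(r)\ge (n-1)H_u(x_0,r)-2rV_0K_u(x_0,r).
\]
Substituting $K_u=rH_u-\phi$ turns this into $\phi'(r)\ge \left((n-1)-2r^2V_0\right)H_u(x_0,r)+2rV_0\phi(r)$. As soon as $2r^2V_0\le n-1$ the coefficient of the positive factor $H_u$ is nonnegative, so $\phi'(r)\ge 2rV_0\phi(r)$; multiplying by the integrating factor $e^{-V_0r^2}$ gives $(e^{-V_0r^2}\phi)'\ge 0$, whence $\phi(r)\ge e^{V_0r^2}\phi(0^{+})=0$, that is $K_u(x_0,r)\le rH_u(x_0,r)$.

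The main obstacle is to obtain the sign of the coefficient with the sharp radius stated in the lemma. The crude estimate above, in which the gradient energy is simply thrown away, only closes the argument under $2r^2V_0\le n-1$, i.e. up to a factor $\rho_1/\sqrt{2}$ rather than the optimal $\rho_1=\sqrt{(n-1)/V_0}$. Recovering the sharp radius forces one to retain $\int_{B(x_0,r)}|\nabla u|^2\,dx$ and to estimate the combination $\int_{B(x_0,r)}\left(Vu^2-|\nabla u|^2\right)dx=-\int_{S(x_0,r)}u\,\partial_\nu u\,dS$ from below by a Poincar\'e/first-eigenvalue argument on the ball; this is precisely where the second threshold $\rho_0=\sqrt{\lambda_1(B(0,1))/V_0}$, and hence the first Dirichlet eigenvalue used in Lemma \ref{lemma+1}, enters the bookkeeping. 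This refined estimate, carried out as in \cite{GL1, GL2, Ch1}, is the delicate point; once it is in place the Gronwall step concludes exactly as above.
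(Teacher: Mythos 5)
Your reduction and the Gronwall step are correct as far as they go: the identities $K_u(x_0,r)=\int_0^r H_u(x_0,s)\,ds$ and $rH_u'(x_0,r)=(n-1)H_u(x_0,r)+2\int_{B(x_0,r)}\bigl(|\nabla u|^2-Vu^2\bigr)dx$ are right (indeed, your sign here is the consistent one; the paper's printed definition of $D_u$ with $+Vu^2$ is a sign typo relative to this identity), $\phi'=rH_u'$, and the integrating-factor argument is sound, needing only $H_u\ge 0$. The genuine gap is that this proves the inequality only for $0<r<\min\bigl(\delta,\rho_1/\sqrt{2}\bigr)$, while the lemma claims it for $0<r<\delta_0=\min(\rho_0,\rho_1,\delta)$. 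Since $\lambda_1(B(0,1))>n-1$ (it is $j_{0,1}^2\approx 5.78$ for $n=2$ and $\pi^2$ for $n=3$), one has $\rho_1<\rho_0$, so the claimed threshold is effectively $\min(\rho_1,\delta)$ and your range is strictly smaller by the factor $\sqrt{2}$. You acknowledge this, but the paragraph meant to recover the sharp radius is not a proof, and as sketched it would fail: the ``Poincar\'e/first-eigenvalue argument on the ball'' that underlies $\rho_0$ in Lemma \ref{lemma+1} is available there only because the hypothesis $H_u(x_0,r)=0$ puts $u\in H_0^1(B(x_0,r))$; in the present situation $u$ has no reason to vanish on $S(x_0,r)$, so the Dirichlet--Poincar\'e inequality does not apply. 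What is actually needed to get $\phi'\ge 0$ on the stated range is a boundary-weighted (Robin/Steklov-type) spectral bound such as $V_0\int_{B(x_0,r)}u^2dx\le\int_{B(x_0,r)}|\nabla u|^2dx+\frac{n-1}{2r}\int_{S(x_0,r)}u^2dS$, and establishing such a bound up to $r<\min(\rho_0,\rho_1)$ is the real content you have deferred. (Note also that your sketch asks for a lower bound on $\int_{B}(Vu^2-|\nabla u|^2)dx$, whereas the argument requires an upper bound on that quantity.)

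Two mitigating remarks. First, the paper itself gives no proof of this lemma: it only states that it ``can be deduced from the calculations developed in'' \cite{GL1,GL2} (see also \cite{Ch1}), so there is no internal argument to compare against, and your attempt is an honest self-contained derivation of a weaker form. Second, the $\sqrt{2}$ loss is harmless for every downstream use in the paper: in Lemma \ref{lemma-a4} and Proposition \ref{proposition-a2} only the existence of some threshold $\delta_0>0$ determined by $\Omega$, $v_0$, $V_0$, $\delta$ matters, so your version would serve the paper equally well. But as a proof of the statement as written, the claimed range $\min(\rho_0,\rho_1,\delta)$ is not reached, and the missing step is precisely the delicate one.
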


\begin{lemma}\label{lemma-a2}
Let $u\in \mathscr{S}(v_0,V_0)$, $u\ne 0$ and $x_0\in \Omega ^\delta$. Then
\[
N_u(x_0,r)\le C\max (N_u(x_0,\delta _0),1),\quad 0<r<\delta_0,
\]
with a constant $C=C(\Omega ,V_0)>0$ and $\delta_0$ is as in Lemma \ref{lemma-a3}.
\end{lemma}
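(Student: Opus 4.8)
The plan is to run the classical Almgren--Garofalo--Lin frequency function computation, adapted to the zeroth order term $Vu$, and then to read off the estimate from an almost-monotonicity differential inequality. Throughout write $B_r=B(x_0,r)$, $S_r=S(x_0,r)$, $H=H_u(x_0,r)$, $D=D_u(x_0,r)$, $N=N_u(x_0,r)$, let $\nu$ be the outward unit normal on $S_r$, and set $W=\int_{B_r}Vu^2\,dx$. Since $\delta_0\le\rho_0$, Lemma~\ref{lemma+1} gives $H>0$ on $(0,\delta_0)$, so $N$ is well defined; as $H$ and $D$ are absolutely continuous, $N$ is differentiable for a.e.\ $r\in(0,\delta_0)$. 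The objective is the differential inequality $N'(r)\ge-\alpha N(r)-\beta$ with $\alpha,\beta$ depending only on $\Omega$ and $V_0$.

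First I would differentiate $H$ and $D$. Rescaling onto the unit sphere gives $H'(r)=\frac{n-1}{r}H+2\int_{S_r}u\,\partial_\nu u\,dS$, while Green's formula and $\Delta u=-Vu$ yield $\int_{S_r}u\,\partial_\nu u\,dS=\int_{B_r}|\nabla u|^2\,dx-W=D-2W$. For $D$, the co-area formula gives $D'(r)=\int_{S_r}(|\nabla u|^2+Vu^2)\,dS$, and the Rellich--Pohozaev identity, applied to $u$ (legitimate at the $H^2$ regularity of $u$ since $\Delta u=-Vu\in L^2$, by a density argument), gives
\[
D'(r)=\frac{n-2}{r}D+2\int_{S_r}(\partial_\nu u)^2\,dS+E(r),
\]
where the potential contributes the error
\[
E(r)=\frac{2}{r}\int_{B_r}\big((x-x_0)\cdot\nabla u\big)Vu\,dx-\frac{n-2}{r}W+\int_{S_r}Vu^2\,dS.
\]

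Next I would form $N'/N=\frac1r+D'/D-H'/H$; the leading $1/r$ terms cancel and the second-order terms organize into
\[
\frac{N'}{N}=\frac{2\int_{S_r}(\partial_\nu u)^2\,dS}{D}-\frac{2(D-2W)}{H}+\frac{E(r)}{D}.
\]
By Cauchy--Schwarz, $\int_{S_r}(\partial_\nu u)^2\,dS\ge\big(\int_{S_r}u\,\partial_\nu u\,dS\big)^2/H=(D-2W)^2/H$, so the first two terms combine to at least $-4W/H+8W^2/(DH)\ge-4W/H$. I would then multiply through by $N$ (using $N/D=r/H$) and bound every piece with Lemma~\ref{lemma-a3}, which provides $K_u=\int_{B_r}u^2\,dx\le rH$ and hence $W\le V_0K_u\le V_0rH$: the main residual gives $\ge-4V_0rN$; the Rellich cross term is controlled through
\[
\Big|\frac{2}{H}\int_{B_r}\big((x-x_0)\cdot\nabla u\big)Vu\,dx\Big|\le\frac{2V_0r}{H}\Big(\int_{B_r}|\nabla u|^2\Big)^{1/2}(rH)^{1/2}\le2V_0r\,N^{1/2};
\]
the term $-\frac{n-2}{H}W\ge-(n-2)V_0r$, and $\frac{r}{H}\int_{S_r}Vu^2\,dS\ge0$. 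With Young's inequality $N^{1/2}\le\frac12(N+1)$ this produces $N'\ge-\alpha N-\beta$, the constants depending only on $V_0$ and on $\delta_0\le\min(\rho_0,\rho_1,\mathbf{d})$, hence only on $\Omega$ and $V_0$.

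Finally, $N'\ge-\alpha N-\beta$ says exactly that $r\mapsto e^{\alpha r}\big(N(r)+\beta/\alpha\big)$ is nondecreasing; comparing its values at $r$ and $\delta_0$ gives $N(r)\le e^{\alpha\delta_0}\big(N(\delta_0)+\beta/\alpha\big)\le e^{\alpha\delta_0}(1+\beta/\alpha)\max(N(\delta_0),1)$, which is the asserted bound with $C=C(\Omega,V_0)$. I expect the main difficulty to be the bookkeeping of the potential-dependent error terms---above all the cross term $\int_{B_r}((x-x_0)\cdot\nabla u)Vu\,dx$ coming from the Rellich identity---and verifying that each is absorbed into $-\alpha N-\beta$ with coefficients governed solely by $V_0$ and the geometric quantity $\delta_0$. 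This is precisely where Lemma~\ref{lemma-a3} is indispensable, as it converts the bulk $L^2$ mass of $u$ on $B_r$ into the surface quantity $H$, and thereby into $N$ itself.
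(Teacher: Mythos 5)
Your proposal is correct and takes essentially the same approach as the paper: the paper gives no self-contained proof of this lemma, deferring to the frequency-function calculations of \cite{GL1,GL2}, and your argument is a faithful execution of exactly those calculations, adapted via the correction term $W=\int_{B_r}Vu^2\,dx$ (forced by the paper's sign convention in $D_u$) and Lemma \ref{lemma-a3} to absorb the potential-dependent errors. The resulting almost-monotonicity inequality $N'\ge -\alpha N-\beta$, with $\alpha ,\beta$ depending only on $\Omega$ and $V_0$, integrated from $r$ to $\delta_0$, gives precisely the stated bound.
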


Fix $0<\alpha \le 1$. We say that $W\subset L^1_+(\Omega )=\{ w\in L^1(\Omega );\; w\ge 0\}$ is a uniform set of weights for the weighted interpolation inequality 
\begin{equation}\label{w1}
\| f\|_{L^\infty (\Omega )}\le C\|f\|_{C^{0,\alpha}(\overline{\Omega})}^{1-\mu}\|fw\|_{L^1(\Omega )}^\mu ,
\end{equation}
if the constants $C>0$ and $0< \mu <1$ in \eqref{w1} can be chosen independently in $w\in W$ and $f\in C^{0,\alpha}(\overline{\Omega})$.

Similarly, we will say that $W\subset L^1_+(\Omega )$ is a uniform set of interior weights for the interior weighted interpolation inequality, where $\omega \Subset \Omega$ is arbitrary,
\begin{equation}\label{w1i}
\| f\|_{L^\infty (\omega )}\le C\|f\|_{C^{0,\alpha}(\overline{\Omega})}^{1-\mu}\|fw\|_{L^1(\Omega )}^\mu
\end{equation}
 if the constants $C>0$ and $0< \mu <1$ in \eqref{w1i}, depending on $\omega$, can be chosen independently in $w\in W$ and $f\in C^{0,\alpha}(\overline{\Omega})$.

\begin{remark}\label{remark1}
{\rm
Let $W\subset L^1_+(\Omega )$ be a uniform set of weights for the weighted interpolation inequality \eqref{w1}. Pick $w\in W$ and consider $Z=\{x\in \overline{\Omega};\; w(x)=0\}$. We claim that $Z$ has empty interior. Otherwise, if $\ring{Z}\ne\o$ then we would find $f_0\in C_0^\infty (\ring{Z})$ non identically equal to zero. But \eqref{w1} with $f=f_0$ would entail that $f_0=0$. That is we have a contradiction and our claim is proved. 
}
\end{remark}

Let $\varkappa$ be the norm of the imbedding $H^2(\Omega )\hookrightarrow C(\overline{\Omega})$. We introduce two sets, where $0<\kappa \le \varkappa M$ are given constants,
\begin{align*}
\mathscr{S}_w(v_0,V_0,\kappa ,M)=\{u\in H^2(\Omega );\; \Delta u+Vu&=0,\; \mbox{for some}\; V\in \mathscr{V}(v_0,V_0)\
\\ 
&\mbox{and}\; \|u\|_{H^2(\Omega)}\le M,\; \|u\|_{L^\infty (\Gamma )}\ge \kappa  \},
\end{align*}
and
\begin{align*}
\mathscr{S}_s(v_0,V_0,\kappa ,M)=\{u\in H^2(\Omega );\; \Delta u+Vu&=0,\; \mbox{for some}\; V\in \mathscr{V}(v_0,V_0)\
\\ 
&\mbox{and}\; \|u\|_{H^2(\Omega)}\le M,\; |u|\ge \kappa \; \mbox{on}\; \Gamma \}.
\end{align*}

The main ingredient in establishing weighted interpolation inequalities is the following theorem. Its proof, which is quite technical, is given in Appendix A.
\begin{theorem}\label{theorem2.1}
Let $0<\delta <\mathbf{d}$. Then there exists a constants $c=c(\Omega ,v_0,V_0,\kappa ,M,\delta )>0$ so that, for any  $x_0\in \Omega ^\delta$ and $u\in \mathscr{S}_w(v_0,V_0,\kappa ,M)$, we have
\begin{equation}\label{2.1.1}
e^{-e^{c/\delta}}\le \|u\|_{L^2(B(x_0,\delta ))}.
\end{equation}
\end{theorem}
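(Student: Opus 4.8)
The plan is to propagate a lower bound on $u$ from the boundary, where the constraint $\|u\|_{L^\infty(\Gamma)}\ge\kappa$ lives, into the interior ball $B(x_0,\delta)$ through a chain of local estimates. It suffices to treat small $\delta$, say $0<\delta\le\delta_1$ with $\delta_1=\delta_1(\Omega,v_0,V_0,\kappa,M)$, since for $\delta_1<\delta<\mathbf{d}$ the right-hand side $e^{-e^{c/\delta}}$ is bounded below by a fixed constant and the claim then follows a fortiori from the small-$\delta$ estimate applied with radius $\min(\delta,\delta_1)$. There are three ingredients: a fixed-depth interior ball on which $\|u\|_{L^2}$ is controlled from below by the data; a three-ball inequality for $\|u\|_{L^2(B(\cdot,r))}$ with constants uniform over $\mathscr{S}_w(v_0,V_0,\kappa,M)$; and a quantitative spreading argument over $\simeq 1/\delta$ balls.

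First I would produce the interior anchor ball. Since $u\in H^2(\Omega)$ with $\|u\|_{H^2(\Omega)}\le M$ and $n\in\{2,3\}$, the embedding $H^2(\Omega)\hookrightarrow C^{0,\beta}(\overline{\Omega})$ gives $\|u\|_{C^{0,\beta}(\overline{\Omega})}\le c_S M$ for some $\beta\in(0,1]$. As $u$ is continuous and $\Gamma$ compact, $\max_\Gamma|u|\ge\kappa$, so there is $\xi\in\Gamma$ with $|u(\xi)|\ge\kappa$; the Hölder bound then yields $|u|\ge\kappa/2$ on $B(\xi,r_0)$ with $r_0=(\kappa/(2c_SM))^{1/\beta}$ depending only on the data. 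Using the uniform interior ball condition of the $C^2$ domain $\Omega$ (radius $\rho_\Omega$), I place a ball $B(z_0,\eta)\subset B(\xi,r_0)\cap\Omega$ with $\eta=\tfrac12\min(r_0,\rho_\Omega)$ and $\mathrm{dist}(z_0,\Gamma)\ge\eta$, so $|u|\ge\kappa/2$ throughout $B(z_0,\eta)$. In particular, for $\delta\le 8\eta$ one has $B(z_0,\delta/8)\subset B(z_0,\eta)$ and hence $\|u\|_{L^2(B(z_0,\delta/8))}\ge c\,\delta^{n/2}$, with $c,\eta$ depending only on $(\Omega,v_0,V_0,\kappa,M)$.

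Next I would record a three-ball inequality. The almost-monotonicity of the frequency function furnished by Lemmas \ref{lemma-a2} and \ref{lemma-a3}, together with the identity expressing $\frac{d}{dr}\log\!\big(H_u(x_0,r)/r^{n-1}\big)$ in terms of $N_u$, yields a log-convexity (three-sphere) estimate $H_u(x_0,r_2)\le C\,H_u(x_0,r_1)^{\theta}H_u(x_0,r_3)^{1-\theta}$ for $0<r_1<r_2<r_3<\delta_0$, where $C$ and $\theta\in(0,1)$ depend only on $(\Omega,v_0,V_0)$ and the ratios $r_i/r_3$; this uses only comparisons of $H_u$ at three radii, and not any lower bound on $H_u$ (which is nonetheless positive by Lemma \ref{lemma+1}). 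Integrating in $r$ and invoking Lemma \ref{lemma-a3} to pass between spherical and solid integrals converts this into the solid version $\|u\|_{L^2(B(x_0,r_2))}\le C'\|u\|_{L^2(B(x_0,r_1))}^{\theta'}\|u\|_{L^2(B(x_0,r_3))}^{1-\theta'}$, again with uniform constants. For small $\delta$ the choice $r_1=\delta/8,\ r_2=\delta/4,\ r_3=\delta/2$ (all below $\delta_0$ and below $\mathrm{dist}(y,\Gamma)$ whenever the centre $y$ lies in $\Omega^\delta$) gives a single three-ball inequality valid at every such centre.

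Finally I would run the spreading argument. For $\delta$ small, $\Omega^\delta$ is connected with intrinsic diameter bounded uniformly in $\delta$, so $x_0$ and $z_0$ can be joined by a path in $\Omega^\delta$ covered by centres $y_0=x_0,y_1,\dots,y_N=z_0$ with $|y_{i+1}-y_i|\le\delta/8$ and $N\le C/\delta$. Writing $\epsilon_i=\|u\|_{L^2(B(y_i,\delta/8))}$, the inclusion $B(y_{i+1},\delta/8)\subset B(y_i,\delta/4)$, the three-ball inequality, and the uniform bound $\|u\|_{L^2(B)}\le M$ give $\epsilon_{i+1}\le A\,\epsilon_i^{\theta'}$ with $A=C'M^{1-\theta'}$; iterating yields $\epsilon_N\le A^{1/(1-\theta')}\epsilon_0^{(\theta')^N}$. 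Combining with the anchor bound $\epsilon_N\ge c\,\delta^{n/2}$ gives $\epsilon_0\ge\big(c\,\delta^{n/2}/A^{1/(1-\theta')}\big)^{(\theta')^{-N}}$, and since $(\theta')^{-N}=e^{N\log(1/\theta')}\le e^{c_1/\delta}$ while $\log\!\big(A^{1/(1-\theta')}/(c\,\delta^{n/2})\big)\le e^{c_2/\delta}$ for small $\delta$, one obtains $\|u\|_{L^2(B(x_0,\delta))}\ge\epsilon_0\ge e^{-e^{c/\delta}}$ after enlarging $c$, because $B(x_0,\delta/8)\subset B(x_0,\delta)$. The main obstacle is precisely this last stage: one must secure the three-ball inequality with constants genuinely independent of $u$ and $V$ (the role of Lemmas \ref{lemma-a2}–\ref{lemma-a3} and of Lemma \ref{lemma+1}), verify that the Harnack-type chain stays inside $\Omega^\delta$ with ball count $N\lesssim 1/\delta$, and track the two nested exponentials so that the accumulated loss is exactly of order $e^{-e^{c/\delta}}$; the conversion between the spherical quantity $H_u$ and the solid $L^2$ norm via Lemma \ref{lemma-a3} is the technical glue.
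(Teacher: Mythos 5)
Your proposal is correct, but it takes a genuinely different --- and in one important respect simpler --- route than the paper. The paper's proof has two propagation mechanisms: an interior chain of balls of radius $\sim\delta$ driven by the three-ball inequality of Theorem \ref{theoremI} (its Step 1, essentially identical to your spreading argument, including the $N\le C/\delta$ count and the double-exponential bookkeeping), and a second, much more technical chain (its Step 2) of geometrically shrinking balls $B(x_k,\rho_k)$ inside an interior cone $\mathcal{C}(\tilde{x})$ at the boundary point $\tilde{x}$ where $|u|$ attains $\|u\|_{L^\infty(\Gamma)}\ge\kappa$; there the $C^{0,1/2}$ bound is invoked at every scale $\mu^k\delta$ and the depth $k$ is optimized, producing \eqref{3.111}, which is then spliced into the interior estimate \eqref{est4} in Step 3. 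Your anchor-ball observation collapses that Step 2 entirely: since $[u]_{1/2}\le aM$ uniformly on $\mathscr{S}_w(v_0,V_0,\kappa,M)$, the single inequality $|u(x)|\ge|u(\xi)|-aM|x-\xi|^{1/2}$ gives $|u|\ge\kappa/2$ on $B(\xi,r_0)\cap\Omega$ with the \emph{fixed} radius $r_0=(\kappa/(2aM))^{2}$, and the interior cone (\textbf{UICP}) then supplies a fixed interior ball inside it, so only the interior chain remains; your exponent bookkeeping $(\theta')^{-N}\le e^{c_1/\delta}$ and $\ln\bigl(A^{1/(1-\theta')}/(c\,\delta^{n/2})\bigr)\le e^{c_2/\delta}$ correctly lands on $e^{-e^{c/\delta}}$, and your reduction to small $\delta$ is sound. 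Notably, this is the same H\"older-continuity device the paper itself uses in the proof of part (2) of Theorem \ref{theorem-wii} (for the class $\mathscr{S}_s$, where it applies along all of $\Gamma$); you simply apply it at the maximum point, which suffices for $\mathscr{S}_w$. What the paper's cone-chain buys instead is nothing extra here, since the uniform $H^2$ bound makes the H\"older seminorm available; your route is the more elementary one.

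One point you should clean up: obtaining the three-ball inequality ``from Lemmas \ref{lemma-a2} and \ref{lemma-a3}'' risks circularity, because Lemma \ref{lemma-a2} as stated only controls $N_u(x_0,r)$ by $N_u(x_0,\delta_0)$, and the uniform bound on $N_u(x_0,\delta_0)$ over $\mathscr{S}_w$ is precisely Lemma \ref{lemma-a4}, whose proof uses Theorem \ref{theorem2.1} --- the statement being proved. Your parenthetical remark (log-convexity needs only two-sided almost-monotonicity at intermediate radii and no lower bound on $H_u$) is the correct escape, but it requires the full Garofalo--Lin almost-monotonicity $N_u(x_0,r)\le C\max(N_u(x_0,r'),1)$ for all $r<r'$, not the stated lemma. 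The cleanest fix is to quote Theorem \ref{theoremI} (from \cite{BCT}) directly, exactly as the paper's appendix does: it is the uniform three-ball inequality you need, with constants depending only on $\Omega$, the radii ratios, and $\Lambda$, and it removes any appearance of circularity. Finally, your insistence that the chain stay in $\Omega^{\delta}$ with radii $\le\delta/2$ is if anything more careful than the paper's Step 1, whose connecting path is only taken in $\Omega$; for the $C^{2}$ domain of the theorem your connectivity and bounded-intrinsic-diameter claims for $\Omega^\delta$ at small $\delta$ are standard.
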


\begin{theorem}\label{theorem-wii}
(1) The set $\mathscr{W}_w(v_0,V_0,\kappa ,M)=\{w=u^2;\; u\in \mathscr{S}_w(v_0,V_0,\kappa ,M)\}$ is a uniform set of interior weights for the interior weighted interpolation inequality \eqref{w1i}.

\noindent
(2) The set $\mathscr{W}_s(v_0,V_0,\kappa ,M)=\{w=u^2;\; u\in \mathscr{S}_s(v_0,V_0,\kappa ,M)\}$ is a uniform set of weights for the weighted interpolation inequality \eqref{w1}.
\end{theorem}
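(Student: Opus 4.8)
The plan is to reduce both assertions to a single uniform \emph{polynomial} lower bound for the weight mass of small balls: there should exist $\delta_0>0$, an exponent $D\ge n$ and $c_\ast>0$, depending only on $\Omega,v_0,V_0,\kappa,M$ (and, in case (1), on $\omega$), such that $\int_{B(x_0,r)\cap\Omega}u^2\,dx\ge c_\ast r^{D}$ for every $u\in\mathscr{S}_w$ (resp. $\mathscr{S}_s$), every admissible centre $x_0$ and every $0<r\le\delta_0$. Granting this, fix $f\in C^{0,\alpha}(\overline{\Omega})$, put $F=\|f\|_{C^{0,\alpha}(\overline{\Omega})}$ and $L=\|f\|_{L^\infty(\omega)}$ (resp. $\|f\|_{L^\infty(\Omega)}$), and pick an admissible centre $x^\ast$ with $|f(x^\ast)|=L$. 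Taking $r=\min(\delta_0,(L/2F)^{1/\alpha})$, Hölder continuity gives $|f|\ge L/2$ on $B(x^\ast,r)$, whence $\|fw\|_{L^1(\Omega)}\ge \tfrac{L}{2}\int_{B(x^\ast,r)\cap\Omega}u^2\,dx\ge \tfrac{c_\ast L}{2}\,r^{D}$. Treating the two cases $r=(L/2F)^{1/\alpha}$ and $r=\delta_0$ separately and using $L\le F$, one obtains \eqref{w1i} (resp. \eqref{w1}) with $\mu=\alpha/(D+\alpha)$ and with $C$ independent of $w$ and $f$. So everything rests on the uniform mass estimate.

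To establish that estimate in the interior I would combine Theorem \ref{theorem2.1} with the frequency function. Fix $\delta_0\le\min(\rho_0,\rho_1)$ and, in case (1), also $\delta_0<\mathrm{dist}(\omega,\Gamma)$, so that the relevant centres lie in $\Omega^{\delta_0}$. Theorem \ref{theorem2.1} gives $\int_{B(x_0,\delta_0)}u^2\,dx=\|u\|_{L^2(B(x_0,\delta_0))}^2\ge m:=e^{-2e^{c/\delta_0}}>0$, uniformly. The crucial point is that this \emph{solid} lower bound converts, through Lemma \ref{lemma-a3} ($K_u\le rH_u$), into a \emph{spherical} lower bound $H_u(x_0,\delta_0)\ge m/\delta_0$, whereas the $H^2$ bound yields $D_u(x_0,\delta_0)\le \|u\|_{H^1(\Omega)}^2+V_0\|u\|_{L^2(\Omega)}^2\le CM^2$. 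Hence the frequency is bounded uniformly, $N_u(x_0,\delta_0)\le C\delta_0^2M^2/m=:N_0$, and Lemma \ref{lemma-a2} propagates this bound to all smaller radii. A bounded frequency gives the standard doubling inequality, so that $\int_{B(x_0,r)}u^2\,dx\ge c_0(r/\delta_0)^{D}\int_{B(x_0,\delta_0)}u^2\,dx\ge c_0 m\,(r/\delta_0)^{D}$ with $D=D(N_0,n)$. This is the desired mass estimate, and since the centres range over $\Omega^{\delta_0}\supset\overline{\omega}$ it already proves part (1).

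For part (2) the stronger hypothesis $|u|\ge\kappa$ on $\Gamma$ lets me also control a boundary strip. Since $\mathscr{S}_s\subset\mathscr{S}_w$, the interior estimate above still applies on $\Omega^{\delta_1/2}$. Near $\Gamma$, I would use the uniform embedding $H^2(\Omega)\hookrightarrow C^{0,\beta}(\overline{\Omega})$ (valid for $n\le 3$), giving $[u]_{C^{0,\beta}(\overline{\Omega})}\le C_0M$; combined with $|u|\ge\kappa$ on $\Gamma$ this forces $|u|\ge\kappa/2$, hence $w=u^2\ge\kappa^2/4$, on the strip $\Omega_{\delta_1}$ with $\delta_1=(\kappa/2C_0M)^{1/\beta}$. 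For a centre $x^\ast$ with $\mathrm{dist}(x^\ast,\Gamma)\le\delta_1/2$ and $0<r\le\delta_1/2$ one has $B(x^\ast,r)\cap\Omega\subset\Omega_{\delta_1}$, and the $C^2$-regularity of $\Omega$ (interior cone condition) gives $|B(x^\ast,r)\cap\Omega|\ge c_nr^n$; therefore $\int_{B(x^\ast,r)\cap\Omega}u^2\,dx\ge\tfrac{\kappa^2}{4}c_nr^n$, the mass estimate with exponent $n$. Keeping the larger exponent and the smaller constant, the estimate then holds for \emph{every} $x^\ast\in\overline{\Omega}$, and the computation of the first paragraph produces \eqref{w1}, i.e. full rather than merely interior weights.

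The main obstacle is precisely the uniform doubling inequality, equivalently the uniform frequency bound $N_0$, and this is where Theorem \ref{theorem2.1} is indispensable: without a lower bound on $H_u(x_0,\delta_0)$ the frequency, hence the doubling exponent and constant, could blow up along the family and destroy uniformity. It is worth noting that Theorem \ref{theorem2.1} alone yields only the double-exponentially small bound $e^{-2e^{c/r}}$ on $\int_{B(x_0,r)}u^2$, which would give a mere logarithmic modulus; it is the doubling upgrade that replaces this by the polynomial rate $r^{D}$ and thereby delivers a genuinely H\"older inequality. I would therefore take care to check that Lemmas \ref{lemma-a2}--\ref{lemma-a3} and the ensuing doubling inequality hold with constants depending only on $(\Omega,v_0,V_0)$ and on $N_0$, so that all constants are uniform over $\mathscr{S}_w$ (resp. $\mathscr{S}_s$); the remaining steps (the H\"older interpolation and the boundary embedding) are routine by comparison.
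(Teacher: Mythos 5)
Your proposal is correct and takes essentially the same route as the paper: the uniform polynomial mass bound you postulate is precisely the paper's Proposition \ref{proposition-a2}, which the paper also derives by feeding Theorem \ref{theorem2.1} and Lemma \ref{lemma-a3} into a uniform frequency bound (Lemmas \ref{lemma-a4} and \ref{lemma-a2}) and the resulting doubling inequality. Your H\"older-interpolation step (choosing $r\sim (L/F)^{1/\alpha}$ rather than adding $r^\alpha$ and optimizing) and your treatment of (2) via the $H^2(\Omega)\hookrightarrow C^{0,1/2}(\overline{\Omega})$ embedding, the strip $|u|\ge \kappa/2$, and the bound $|B(x^\ast,r)\cap\Omega|\ge c\, r^n$ are the same arguments as in the paper, up to minor repackaging.
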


Before proving this theorem, we establish some preliminaries.

\begin{lemma}\label{lemma-a4}
Let $0<\delta <\mathbf{d}$. There exists a constant $C=C(\Omega,v_0,V_0,\kappa ,M,\delta )>0$ so that, for any $u \in \mathscr{S}_w(v_0,V_0,\kappa , M)$, we have
\[
\|N_u\|_{L^\infty (\Omega ^\delta \times (0,\delta_0))}\le C.
\]
Here $\delta _0$ is as in Lemma \ref{lemma-a3}.
\end{lemma}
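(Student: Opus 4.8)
The plan is to use the almost-monotonicity of the frequency function (Lemma \ref{lemma-a2}) to reduce the uniform bound on the whole strip $\Omega^\delta\times(0,\delta_0)$ to a single uniform bound on $N_u(x_0,\delta_0)$, and then to control the latter by estimating its numerator from above and its denominator $H_u(x_0,\delta_0)$ from below. I first observe that on $\Omega^\delta\times(0,\delta_0]$ the frequency function is well defined: since $\delta_0=\min(\rho_0,\rho_1,\delta)\le\min(\delta,\rho_0)$, Lemma \ref{lemma+1} gives $H_u>0$ there, and $N_u\ge 0$ because $V\ge v_0>0$ forces $D_u\ge 0$. By Lemma \ref{lemma-a2}, for every $x_0\in\Omega^\delta$ and $0<r<\delta_0$ we have $N_u(x_0,r)\le C\max(N_u(x_0,\delta_0),1)$ with $C=C(\Omega,V_0)$, so it suffices to produce a bound $N_u(x_0,\delta_0)\le C_0$ uniform in $x_0\in\Omega^\delta$ and $u\in\mathscr{S}_w(v_0,V_0,\kappa,M)$.

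The numerator is straightforward. Using $V\le V_0$ together with the $H^2$-bound built into $\mathscr{S}_w$, I would estimate
\[
\delta_0 D_u(x_0,\delta_0)=\delta_0\int_{B(x_0,\delta_0)}\left(|\nabla u|^2+Vu^2\right)dx\le \delta_0(1+V_0)\|u\|_{H^2(\Omega)}^2\le \delta_0(1+V_0)M^2.
\]

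The crux is the lower bound on $H_u(x_0,\delta_0)$, where I chain the two results already at hand. Lemma \ref{lemma-a3}, extended to the endpoint $r=\delta_0$ by continuity of the relevant integrals, gives $\|u\|_{L^2(B(x_0,\delta_0))}^2=K_u(x_0,\delta_0)\le \delta_0 H_u(x_0,\delta_0)$, hence $H_u(x_0,\delta_0)\ge \delta_0^{-1}\|u\|_{L^2(B(x_0,\delta_0))}^2$. Theorem \ref{theorem2.1}, applied with radius $\delta_0$ in place of $\delta$ — legitimate since $\delta_0\le\delta$ forces $x_0\in\Omega^\delta\subseteq\Omega^{\delta_0}$ and $0<\delta_0<\mathbf{d}$ — then yields the quantitative bound $\|u\|_{L^2(B(x_0,\delta_0))}\ge e^{-e^{c/\delta_0}}$ with $c=c(\Omega,v_0,V_0,\kappa,M,\delta_0)$. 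Combining the two,
\[
H_u(x_0,\delta_0)\ge \delta_0^{-1}e^{-2e^{c/\delta_0}}.
\]

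Dividing the numerator bound by this lower bound gives $N_u(x_0,\delta_0)\le \delta_0^2(1+V_0)M^2\,e^{2e^{c/\delta_0}}=:C_0$, uniform in $x_0$ and $u$; Lemma \ref{lemma-a2} then upgrades this to $\|N_u\|_{L^\infty(\Omega^\delta\times(0,\delta_0))}\le C\max(C_0,1)$, which is the claim (note $\delta_0$, hence $c$, depends only on $\delta$ and the fixed data, so the final constant has the stated dependence). The only genuinely hard ingredient is Theorem \ref{theorem2.1}, whose proof is deferred to the appendix; everything else is bookkeeping. The single point deserving care is matching the radius $\delta_0$ to the hypotheses of Theorem \ref{theorem2.1} and justifying the endpoint value in Lemma \ref{lemma-a3}, both of which are harmless since $\delta_0\le\delta$ and the integrals depend continuously on $r$.
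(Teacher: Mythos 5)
Your proof is correct and takes essentially the same route as the paper's: apply Theorem \ref{theorem2.1} at radius $\delta_0$ to get $K_u(x_0,\delta_0)\ge C$, combine with Lemma \ref{lemma-a3} to obtain the lower bound $H_u(x_0,\delta_0)\ge C$, and then use Lemma \ref{lemma-a2} to propagate the bound to all $0<r<\delta_0$. The details you make explicit --- the numerator bound $\delta_0 D_u(x_0,\delta_0)\le \delta_0(1+V_0)M^2$, the endpoint use of Lemma \ref{lemma-a3}, and the legitimacy of taking $\delta_0$ as the radius in Theorem \ref{theorem2.1} since $\Omega^\delta\subseteq\Omega^{\delta_0}$ --- are exactly the steps the paper leaves implicit.
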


\begin{proof}
Let $x_0\in \Omega ^\delta$. From Theorem \ref{theorem2.1}
\begin{equation}\label{a12}
K_u(x_0,\delta_0 )=\| u\|_{L^2(B(x_0,\delta_0 ) )}^2 \ge C.
\end{equation}
 Combined with Lemma \ref{lemma-a3}, this estimate yields
\begin{equation}\label{a13}
H_u(x_0,\delta_0 )\ge C.
\end{equation}
In light of Lemma \ref{lemma-a2}, we end up getting
\[
N_u(x_0,r)\le C,\quad 0<r<\delta_0,
\]
which leads immediately to the expected inequality.
\end{proof}

\begin{proposition}\label{proposition-a2}
Let $0<\delta <\mathbf{d}$. There exist two constants $C=C(\Omega,v_0,V_0,\kappa ,M,\delta )>0$ and $c=c(\Omega,v_0,V_0,\kappa ,M,\delta )>0$ so that, for any $u\in \mathscr{S}_w(v_0,V_0,\kappa ,M)$, we have
\[
Cr^c\le \|u\|_{L^2(B(x_0,r ))},\quad x_0\in \Omega ^\delta ,\;0<r<\delta_0,
\]
where $\delta _0$ is as in Lemma \ref{lemma-a3}.
\end{proposition}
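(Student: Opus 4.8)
The plan is to upgrade the \emph{uniform} bound on the frequency function furnished by Lemma~\ref{lemma-a4} into a quantitative, polynomial-in-$r$ lower bound for the spherical average $H_u(x_0,r)$, and then to recover $\|u\|_{L^2(B(x_0,r))}$ from it by integrating radially. Fix $x_0\in\Omega^\delta$ and $u\in\mathscr{S}_w(v_0,V_0,\kappa,M)$; by Lemma~\ref{lemma+1} we have $H_u(x_0,r)>0$ on $(0,\delta_0)$, so all the logarithms below are licit, and every constant produced will be one of the uniform constants of Lemma~\ref{lemma-a4}, hence independent of $x_0$ and $u$.

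First I would differentiate $H_u$. Rescaling the spherical integral onto the fixed unit sphere, a direct computation gives
\[
H_u'(x_0,r)=\frac{n-1}{r}H_u(x_0,r)+2\int_{S(x_0,r)}u\,\partial_\nu u\,dS.
\]
Since $\Delta u=-Vu$, Green's formula turns the last integral into $\int_{B(x_0,r)}(|\nabla u|^2-Vu^2)\,dx$, which is bounded above by $D_u(x_0,r)=\int_{B(x_0,r)}(|\nabla u|^2+Vu^2)\,dx$ because $V\ge v_0>0$ makes $Vu^2\ge0$. Combining this with $N_u(x_0,r)\le C$ from Lemma~\ref{lemma-a4} yields the differential inequality
\[
\frac{d}{dr}\log H_u(x_0,r)\le\frac{n-1}{r}+\frac{2D_u(x_0,r)}{H_u(x_0,r)}=\frac{n-1+2N_u(x_0,r)}{r}\le\frac{n-1+2C}{r}.
\]

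Integrating this between $r$ and $\delta_0$ and exponentiating gives
\[
H_u(x_0,r)\ge H_u(x_0,\delta_0)\left(\frac{r}{\delta_0}\right)^{n-1+2C},\qquad 0<r<\delta_0.
\]
Estimate \eqref{a13} in the proof of Lemma~\ref{lemma-a4} bounds $H_u(x_0,\delta_0)$ below by a positive uniform constant, so $H_u(x_0,r)\ge C_1 r^{n-1+2C}$ with $C_1>0$ uniform. Finally, the co-area formula gives $\|u\|_{L^2(B(x_0,r))}^2=\int_0^r H_u(x_0,s)\,ds$, and inserting the last lower bound under the integral sign yields
\[
\|u\|_{L^2(B(x_0,r))}^2\ge C_1\int_0^r s^{n-1+2C}\,ds=\frac{C_1}{n+2C}\,r^{n+2C}.
\]
Taking square roots gives the claim with $c=(n+2C)/2$.

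The only genuinely delicate point is the differentiation formula for $H_u$: one must justify differentiating under the surface integral and applying Green's identity on $B(x_0,r)$, which uses the regularity $u\in H^2(\Omega)$ so that $u\,\partial_\nu u$ is integrable on almost every sphere, together with a standard approximation argument. This is precisely the computation underlying Lemmas~\ref{lemma-a3} and~\ref{lemma-a2}, taken from \cite{GL1,GL2}, so I would simply quote it; everything after it is the explicit integration of a scalar differential inequality combined with the already-established uniform lower bound on $H_u(x_0,\delta_0)$.
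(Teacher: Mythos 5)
Your proof is correct and takes essentially the same route as the paper: both convert the uniform frequency bound of Lemma \ref{lemma-a4} into the differential inequality $\partial_r\ln\bigl(H_u(x_0,r)/r^{n-1}\bigr)\le C/r$, integrate it up to the scale $\delta_0$, and conclude from the uniform lower bound at that scale furnished by Theorem \ref{theorem2.1}. The differences are only organizational (the paper integrates from $sr$ to $s\delta_0$ and then in $s$ to get a doubling inequality between ball norms, whereas you derive a pointwise lower bound on $H_u$ and integrate radially), and your explicit handling of the sign coming from Green's formula ($-Vu^2$ versus the $+Vu^2$ in the definition of $D_u$) is in fact more careful than the paper's.
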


\begin{proof}
Pick $u\in \mathscr{S}_w(v_0,V_0,\kappa ,M)$ and  $x_0\in \Omega ^\delta$. For simplicity's sake, set $H=H_u$ and $N=N_u$. From the calculations carried out in \cite{GL1,GL2} (see also \cite{Ch1}), we have
\[
\partial _rH(x_0,r)=\frac{n-1}{r}H(x_0,r)+2D(x_0,r).
\]
Whence
\[
\partial _r\left(\ln \frac{H(x_0,r)}{r^{n-1}}\right)=\frac{\partial _rH(x_0,r)}{H(x_0,r)}-\frac{n-1}{r}=\frac{2N(x_0,r)}{r}.
\]
This and Lemma \ref{lemma-a4} entail
\[
\partial _r\left(\ln \frac{H(x_0,r)}{r^{n-1}}\right)\le \frac{C}{r},\quad 0<r<\delta_0 .
\]
Thus
\[
\int_{sr}^{s\delta_0} \partial _t\left(\ln \frac{H(x_0,t)}{t^{n-1}}\right)dt=\ln \frac{H(x_0,s\delta_0)r^{n-1}}{H(x_0,sr)\delta_0 ^{n-1}} \le \ln \frac{\delta_0 ^C}{r^C},\quad 0<s<1,\; 0<r<\delta_0.
\]
Hence
\[
H(x_0,s\delta_0)\le \frac{C}{r^c}H(x_0, sr), \quad 0<r<\delta_0,
\]
and then
\begin{align*}
\|u\|_{L^2(B(x_0,\delta_0))}^2&= \delta ^{n-1}_0\int_0^1H(x_0,s\delta_0 )s^{n-1}ds
\\
&\le \frac{C}{r^c}r ^{n-1}\int_0^1H(x_0, sr )s^{n-1}ds= \frac{C}{r^c}\|u\|_{L^2(B(x_0,r ))}^2, \quad 0<r<\delta_0.
\end{align*}
Combined with \eqref{a12}, this estimate yields
\[
Cr^c\le \|u\|_{L^2(B(x_0,r ))}, \quad 0<r<\delta_0,
\]
as expected.
\end{proof}

\begin{proof}[Proof of Theorem \ref{theorem-wii}]
(1) Let $w\in \mathscr{W}_w(v_0,V_0,\kappa ,M)$ and $u\in \mathscr{S}_w(v_0,V_0,\kappa ,M)$ so that $w=u^2$. Fix $\omega \Subset \Omega$. We need to prove that \eqref{w1i} holds with constants $C$ and $\mu$ that are  independent of $w$ and $f\in C^{0,\alpha}(\overline{\Omega})$. By homogeneity it is enough to establish \eqref{w1i} when $\|f\|_{C^{0,\alpha}(\overline{\Omega})}=1$. To this end, take $f\in C^{0,\alpha}(\overline{\Omega})$ satisfying $\|f\|_{C^{0,\alpha}(\overline{\Omega})}=1$.

Fix $0<\delta <\mathbf{d}$ so that $\overline{\omega}\subset \Omega ^\delta$ and pick $x_0\in \overline{\omega}$ so that $|f(x_0)|=\|f\|_{L^\infty (\omega )}$. According to Proposition \ref{proposition-a2}, there exist   two constants $C=C(\Omega,v_0,V_0,\kappa ,M,\delta )>0$ and $c=c(\Omega,v_0,V_0,\kappa ,M,\delta )>0$ so that, for any $u\in \mathscr{S}_w(v_0,V_0,\kappa ,M)$, we have
\begin{equation}\label{wii1}
Cr^c\le \|u\|_{L^2(B(x_0,r ))},\quad 0<r<\delta_0 ,
\end{equation}
where $\delta_0$ is as in Lemma \ref{lemma-a3}.

But
\[
|f(x_0)|=\|f\|_{L^\infty (\omega )}\le |f(x)|+r^\alpha ,\;\; \mbox{for any}\; x\in B(x_0,r).
\]
Therefore
\begin{align*}
\|f\|_{L^\infty (\omega )}\int_{B(x_0,r)} u(x)^2dx &\le  2\int_{B(x_0,r)}|f(x)| u(x)^2dx +2r^\alpha \int_{B(x_0,r)} u(x)^2dx
\\
&\le 2\int_\Omega |f(x)| u(x)^2dx+2r^\alpha \int_{B(x_0,r)} u(x)^2dx.
\end{align*}
Note that, according to the unique continuation property, 
\[\int_{B(x_0,r)} u(x)^2dx\ne 0,\quad 0<r<\delta_0.\]
 Hence
\[
\|f\|_{L^\infty (\Omega )}\le2 \frac{\|fu^2\|_{L^1(\Omega )}}{\|u\|_{L^2(B(x_0,r))}^2}+2r^\alpha .
\]
Combined with \eqref{wii1}, this estimate yields
\begin{equation}\label{wii2}
\|f\|_{L^\infty (\omega )}\le C(\|fu^2\|_{L^1(\Omega )} r^{-c}+r^\alpha ),\;\; 0<r<\delta_0 .
\end{equation}
When $\|fu^2\|_{L^1(\Omega )}< \delta _0^{c+\alpha}$, we can take 
$r=\|fu^2\|_{L^1(\Omega )}^{1/(c+\alpha)}$ in \eqref{wii2} in order to get
\begin{equation}\label{wii3}
\|f\|_{L^\infty (\omega )}\le C\|fu^2\|_{L^1(\Omega )}^\mu,
\end{equation}
with $\mu =\frac{\alpha}{c+\alpha }$.

If $\|f u^2\|_{L^1(\Omega )}\ge \delta _0^{c+\alpha}$, we have
\begin{equation}\label{wii4}
\|f\|_{L^\infty (\omega )}\le \delta_0^{-(c+\alpha)}\|fu^2\|_{L^1(\Omega )}\le \delta_0^{-(c+\alpha)}M^{2-2\mu}\|fu^2\|_{L^1(\Omega )}^\mu .
\end{equation}
The expected inequality follows then from \eqref{wii3} and \eqref{wii4}.

(2) Let $w\in \mathscr{W}_s(v_0,V_0,\kappa ,M)$ and $u\in \mathscr{S}_s(v_0,V_0,\kappa ,M)$ so that $w=u^2$. As in (1), we have to prove that \eqref{w1} holds with constants $C$ and  $\mu$ independent on $w$ and $f\in C^{0,\alpha}(\overline{\Omega})$. As we have seen before, by homogeneity, it is enough to establish \eqref{w1} when $\|f\|_{C^{0,\alpha}(\overline{\Omega})}=1$. Let then $f\in C^{0,\alpha}(\overline{\Omega})$ satisfying $\|f\|_{C^{0,\alpha}(\overline{\Omega})}=1$.

Since $H^2(\Omega )$ is continuously embedded in $C^{0,1/2}(\overline{\Omega})$, there exists a constant $a=a(\Omega )>0$ so that
\[
[u]_{1/2}=\sup\left\{\frac{|u(x)-u(y)|}{|x-y|^{1/2}};\; x,y\in \overline{\Omega}\; x\neq y\right\}\le a\|u\|_{H^2(\Omega)}\le aM.
\]
Fix $\delta _1\le (\kappa/(2aM))^2$. Then a straightforward computation gives
\[
|u|\ge \kappa /2 \quad \mbox{in}\; \Omega_{\delta_1}.
\]
 
From the H\"older's continuity of $f$, we get, where $\eta=\delta _1/4$,
\[
\|f\|_{L^\infty (\Omega ^\eta )}=|f(x_0)|\le |f(x)|+r^\alpha ,\quad x\in B(x_0,r),\; 0<r<\delta_0 =\delta_0(\eta) .
\]
Whence, proceeding as in (1), we get
\[
\|f\|_{L^\infty (\Omega ^\eta )}\le \frac{\|fu^2\|_{L^1(\Omega )}}{\|u\|_{L^2(B(x_0,r))}^2}+r^\alpha .
\]
This and Proposition \ref{proposition-a2} yield
\begin{equation}\label{a15}
\|f\|_{L^\infty (\Omega ^\eta )}\le C(\|fu^2\|_{L^1(\Omega )} r^{-c}+r^\alpha ),\quad 0<r<\delta_0 .
\end{equation}

On the other hand, noting that $B(y,r)\cap \Omega\subset \Omega_{\delta_1}$ when $y\in \Omega_{2\eta}$ and $0<r<\eta$, we find
\begin{align*}
\|f\|_{L^\infty (\Omega _{2\eta} )}\int_{B(y_0,r)\cap \Omega} u(x)^2dx&=|f(y_0)|\int_{B(y_0,r)\cap \Omega} u(x)^2dx
\\
&\le \int_{B(y_0,r)\cap \Omega}|f(x)| u(x)^2dx +r^\alpha \int_{B(y_0,r)\cap \Omega} u(x)^2dx.
\end{align*}
We have
\[
\int_{B(y_0,r)\cap \Omega}u(x)^2dx\ge \kappa ^2|B(y_0,r)\cap \Omega | 
\] 
Proceeding similarly to the proof of \cite[Appendix A]{CKa}, we find $\aleph =\aleph (\Omega )>0$ and $0<r_0=r_0(\Omega ,\kappa ,M)<\eta$ so that
\[
|B(y_0,r)\cap \Omega |\ge \aleph r^n,\quad 0<r<r_0.
\]
Whence
\[
\|f\|_{L^\infty (\Omega _{2\eta} )}\le (1/\aleph) r^{-n}\|fu^2\|_{L^1(\Omega )}+r^\alpha ,\quad 0<r<r_0.
\]
This means that an estimate of the form \eqref{a15} holds with $\|f\|_{L^\infty (\Omega ^\eta )}$ substituted by 
$\|f\|_{L^\infty (\Omega _{2\eta })}$. In consequence, 
\[
\|f\|_{L^\infty (\Omega )}\le C(\|fu^2\|_{L^1(\Omega )} r^{-c}+r^\alpha ),\quad 0<r<\min (r_0 ,\delta _0) .
\]
We can then mimic the end of the proof of (1) in order to obtain the expected inequality.
\end{proof}

\section{Proof of the main results}

\begin{proof}[Proof of Theorem \ref{theorem1.1}]
Pick $\omega \Subset \Omega$. We firstly observe that, according to \eqref{1}, 
\[
\{u_V;\; V\in \mathscr{D}_0(k,v_0,\overline{V})\}\subset \mathscr{S}_w(v_0,V_0,\kappa ,M),
\]
with $\kappa =\|h\|_{L^\infty (\Gamma )}$, $V_0=v_0+\|\overline{V}\|_{L^\infty (\Omega )}$ and $M=M\left(\Omega ,p,v_0, k ,\overline{V}, h\right)$ is as in \eqref{1}. We  then apply (1) of Theorem \ref{theorem-wii} in order to obtain
\begin{equation}\label{[1]}
\|V-\tilde{V}\|_{L^\infty (\omega )}\le C\|(V-\tilde{V})u_V^2\|_{L^1(\Omega )}^{2\mu} ,
\end{equation}
with $C=C\left(\Omega ,p,v_0, k ,\overline{V}, h ,\omega \right)$ and $\mu =\mu \left(\Omega ,p,v_0, k ,\overline{V}, h,\omega\right)$.

On the other hand, we have from \cite[Theorem 2.2, page 1781]{CT} 
\begin{equation}\label{[2]}
\| I_V^{1/2}(V-\tilde{V})\|_{H^1(\Omega )}\le C\|I_{V}^{1/2}-I_{\tilde{V}}^{1/2}\|_{H^1(\Omega )}^{1/2}.
\end{equation}

Set, for simplicity's sake, $u=u_V$ (resp. $I=I_V$) and $\tilde{u}=u_{\tilde{V}}$ (resp. 
$\tilde{I}=I_{\tilde{V}}$). Then
\begin{align*}
(V-\tilde{V})u^2&=Vu^2-\tilde{V}\tilde{u}^2+\tilde{V}(u^2-\tilde{u}^2)
\\
&=I -\tilde{I}+\tilde{V}(|u|+|\tilde{u}|)(|u|-|\tilde{u}|).
\end{align*}
Hence 
\begin{equation}\label{[3]}
\|(V-\tilde{V})u^2\|_{L^1(\Omega )}\le C\left(\|I^{1/2} -\tilde{I}^{1/2}\|_{L^1(\Omega )}+\| |u|-|\tilde{u}|\|_{L^1(\Omega )}\right).
\end{equation}
But
\[
|\tilde{u}|-|u|=\frac{1}{\tilde{V}}\left(I-\tilde{I}\right)+\frac{I^{1/2}}{V\tilde{V}}\left[\left(V-\tilde{V}\right)I^{1/2}\right]
\]
implying
\begin{equation}\label{[4]}
\|(V-\tilde{V})u^2\|_{L^1(\Omega )}\le C\left(\|I^{1/2} -\tilde{I}^{1/2}\|_{L^1(\Omega )}+\|(V-\tilde{V})I^{1/2}\|_{L^1(\Omega )}\right).
\end{equation}
Now a combination of \eqref{[2]}, \eqref{[3]} and \eqref{[4]} yields
\[
\|V-\tilde{V}\|_{L^\infty (\omega )}\le C\|I^{1/2}-\tilde{I}^{1/2}\|_{H^1(\Omega )}^\mu,
\]
which is the expected inequality.
\end{proof}

\begin{proof}[Proof of Theorem \ref{theorem1.2}]
Quite similar to that of Theorem \ref{theorem1.1}. We have only to apply (2) of 
Theorem \ref{theorem-wii} instead of (1) of Theorem \ref{theorem-wii}.
\end{proof}

\begin{proof}[Proof of Theorem \ref{theorem1.2b}]
The proof relies on an improvement of a weighted stability estimate obtained in \cite[Theorem 2.2, page 1781]{CT}.

\begin{lemma}  \label{weight} Assume that the assumptions of Theorem \ref{theorem1.2b} hold. Then there exist 
$C=C(v_0, \overline V, h, K, \omega, \tilde{\Omega})>0$ and $1>\mu^\prime =\mu^\prime
(v_0, \overline V, h, K,  \omega, \tilde{\Omega})>0$ such that 
\begin{equation}\label{wei}
\| I_V^{1/2}(V-\tilde{V})\|_{L^2(\omega )}\le C\|I_{V}^{1/2}-I_{\tilde{V}}^{1/2}\|_{H^1(\tilde{\Omega})}^{\mu^\prime}.
\end{equation}
\end{lemma}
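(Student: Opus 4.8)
The plan is to localize to $\tilde{\Omega}$ the argument behind the global weighted estimate \cite[Theorem 2.2]{CT}, recalled as \eqref{[2]}, and to pay for the missing boundary information on the interior part $\tilde{\Gamma}\setminus\tilde{\gamma}$ of $\partial\tilde{\Omega}$ by a quantitative unique continuation. Write $u=u_V$, $\tilde{u}=u_{\tilde{V}}$, $w=u-\tilde{u}$ and $\psi=I_V^{1/2}-I_{\tilde{V}}^{1/2}$. Subtracting the two Helmholtz equations gives, in $\tilde{\Omega}$, the divergence identity $\mathrm{div}(\tilde{u}\nabla u-u\nabla\tilde{u})=-(V-\tilde{V})u\tilde{u}$; equivalently $w$ solves $\Delta w+Vw=-(V-\tilde{V})\tilde{u}$ with bounded right-hand side, while the ratio $f=u/\tilde{u}$ solves the self-adjoint equation $\mathrm{div}(\tilde{u}^2\nabla f)+(V-\tilde{V})u\tilde{u}=0$ with $f\equiv 1$ on $\Gamma$. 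The key point is that the hypotheses of Theorem \ref{theorem1.2b} provide controlled Cauchy data on $\tilde{\gamma}$: since $u=\tilde{u}=h$ and $V=\tilde{V}$ there, one has $w=0$ and $\psi=0$ on $\tilde{\gamma}$, and a direct computation using $\nabla V=\nabla\tilde{V}$ on $\tilde{\gamma}$ gives $\partial_\nu\psi=V^{1/2}\partial_\nu w$ on $\tilde{\gamma}$, so that the full Cauchy data of $w$ on $\tilde{\gamma}$ is controlled by $\|\psi\|_{H^1(\tilde{\Omega})}$.

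First I would re-run the testing/energy computation behind \eqref{[2]} on $\tilde{\Omega}$ rather than on $\Omega$, using the pointwise identities underlying \eqref{[3]}--\eqref{[4]} (which are local) to express the source $(V-\tilde{V})u\tilde{u}$ through $\psi$ and through the term $I_{\tilde{V}}^{1/2}(V-\tilde{V})$, the latter being absorbable into the left-hand side because $\|V-\tilde{V}\|_{L^\infty(\Omega)}$ is small on $\mathscr{D}_1(k,v_0,\overline{V},K)$. This produces a localized weighted bound of the form
\[
\|I_V^{1/2}(V-\tilde{V})\|_{L^2(\tilde{\Omega})}\le C\big(\|\psi\|_{H^1(\tilde{\Omega})}^{1/2}+\mathcal{B}\big),
\]
where $\mathcal{B}$ gathers the boundary contributions on $\partial\tilde{\Omega}$. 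Those carried by $\tilde{\gamma}$ are controlled by $\|\psi\|_{H^1(\tilde{\Omega})}$ thanks to the Cauchy data computed above, so the only genuinely uncontrolled part of $\mathcal{B}$ is supported on the interior piece $\tilde{\Gamma}\setminus\tilde{\gamma}$, which lies inside $\Omega$ and where no information on $w$ is prescribed.

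The hard part is to absorb this interior boundary contribution, and it is the only place beyond \cite{CT} where new analysis is needed. Since $w$ solves the uniformly elliptic equation $\Delta w+Vw=F$ with $F=-(V-\tilde{V})\tilde{u}$ controlled, and since the Cauchy data of $w$ on $\tilde{\gamma}$ is of size $\|\psi\|_{H^1(\tilde{\Omega})}$, I would invoke a quantitative unique continuation from $\tilde{\gamma}$---a three-region (three-sphere) inequality, or a Carleman estimate for the associated Cauchy problem---to propagate this smallness across $\tilde{\Omega}$. This bounds $w$, hence the residual part of $\mathcal{B}$ near $\tilde{\Gamma}\setminus\tilde{\gamma}$, by a H\"older power of $\|\psi\|_{H^1(\tilde{\Omega})}$ on any set staying at positive distance from $\tilde{\Gamma}\setminus\tilde{\gamma}$; it is precisely this step that restricts the conclusion to the interior set $\omega\Subset\tilde{\Omega}$ and degrades the exponent from $1/2$ to some $\mu'<1$. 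Combining the propagation estimate with the localized weighted bound, and absorbing the $I_{\tilde{V}}^{1/2}(V-\tilde{V})$ term using the smallness of $\|V-\tilde{V}\|_{L^\infty(\Omega)}$, yields \eqref{wei}. The main obstacle is thus the propagation-of-smallness step across the free interior boundary $\tilde{\Gamma}\setminus\tilde{\gamma}$, where the elliptic equation for $w$ carries an unknown (though small) source that must be folded into the estimate rather than prescribed.
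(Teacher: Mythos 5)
Your observation about the Cauchy data on $\tilde{\gamma}$ is correct and parallels the paper: $w=0$ on $\Gamma$, and using $V=\tilde{V}$ on $\Gamma$ together with $\nabla V=\nabla\tilde{V}$ on $\tilde{\gamma}$ one gets $\partial_\nu\psi=\pm V^{1/2}\partial_\nu w$ on $\tilde{\gamma}$, so the Cauchy data of $w$ there is of size $\|\psi\|_{H^1(\tilde{\Omega})}$. The gap is in the propagation step, and it is fatal as written: the equation you propagate, $\Delta w+Vw=F$ with $F=-(V-\tilde{V})\tilde{u}$, carries a source of exactly the same order as the quantity you are trying to estimate. A quantitative unique continuation (three-sphere or Carleman) estimate for this Cauchy problem treats the source as data and gives at best
\[
\|w\|_{L^2(\omega')}\le C\left(\|\psi\|_{H^1(\tilde{\Omega})}+\|V-\tilde{V}\|_{L^2(\tilde{\Omega})}\right)^{\mu},
\]
and feeding this back into your localized weighted bound leaves a term $C\|V-\tilde{V}\|^{\mu}$ on the right. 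That term cannot be absorbed into the left-hand side $\|I_V^{1/2}(V-\tilde{V})\|_{L^2(\omega)}$: the left side only sees $\omega$ while the error involves all of $\tilde{\Omega}$, and $t^{\mu}\gg t$ as $t\to 0$, so the smallness of $\|V-\tilde{V}\|_{L^\infty(\Omega)}$ on $\mathscr{D}_1(k,v_0,\overline{V},K)$ (which is a fixed constant, not controlled by the data) is of no help. A Carleman estimate also cannot ``fold in'' this source, because its left-hand side controls only $w$ and $\nabla w$, not $V-\tilde{V}$; the argument is circular.

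The paper escapes this circularity by choosing an unknown for which the right-hand side is pure data. Setting $\theta=V^{-1/2}$, $\tilde{\theta}=\tilde{V}^{-1/2}$, $J=I_V^{1/2}$, $\tilde{J}=I_{\tilde{V}}^{1/2}$ (so that $J\theta=|u_V|$ and $\Delta(J\theta)=-J/\theta$ a.e.), the function $v=J(\theta-\tilde{\theta})$ satisfies
\[
\Delta v-\frac{1}{\theta\tilde{\theta}}\,v=\frac{\tilde{J}-J}{\tilde{\theta}}+\Delta\bigl(\tilde{\theta}(\tilde{J}-J)\bigr)\quad\mbox{in}\;\Omega ,
\]
where the unknown difference now appears only on the left, inside the elliptic operator (the coefficient $1/(\theta\tilde{\theta})=(V\tilde{V})^{1/2}$ is bounded), while the right-hand side involves only the datum $\tilde{J}-J=-\psi$. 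Moreover $v$ has zero Cauchy data on $\tilde{\gamma}$ by the same boundary hypotheses you used, without any reference to $w$ or $\tilde{u}$. A single application of the Cauchy-problem stability theorem of \cite{ARRV} then yields \eqref{wei}, since $|J(\theta-\tilde{\theta})|\ge c\,|I_V^{1/2}(V-\tilde{V})|$ pointwise ($V,\tilde{V}$ being bounded above and below). If you wanted to salvage your scheme, you would have to eliminate the unknown from your source via $(V-\tilde{V})u^2=(I_V-I_{\tilde{V}})+\tilde{V}(u+\tilde{u})w$, turning it into a zeroth-order term $\tilde{V}\tilde{u}(u+\tilde{u})u^{-2}\,w$; but that coefficient blows up on the nodal set of $u$ inside $\tilde{\Omega}$, and the paper's choice of variable is designed precisely to avoid this degeneracy.
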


\begin{proof}
We set 
 $\theta =V^{-1/2}$ and $J=I_V^{1/2}$. We deduce from the proof 
of \cite[Theorem 2.2, page 1781]{CT} that $\theta$ satisfies
\begin{equation}\label{e25}
J\Delta\left(J \theta\right)= -\frac{J^2}{\theta} \quad  \textrm{in}\; \Omega.
\end{equation}
Referring to \cite{Ro}, we see, as $u_V$ is non identically equal to zero, that the set where $u_V$ vanishes is of zero measure. Therefore, $J=Vu_V^2$ has the same property and hence $\theta$ verifies 
\begin{equation}\label{e25b}
\Delta\left(J \theta\right)= -\frac{J}{\theta} \quad  \textrm{in}\; \Omega.
\end{equation}
Let $\tilde{\theta} =\tilde{V}^{-1/2}$ and
$\tilde{J}=I_{\tilde{V}}^{1/2}$.  Identity \eqref{e25b}, with $V$ substituted by 
$\tilde{V}$, yields
\begin{equation}\label{e27b}
\Delta \left(\tilde{J} \tilde{\theta}\right) = -
\frac{ \tilde{J}}{\tilde{\theta}} \quad  \textrm{in}\;\Omega.
\end{equation}
Taking the difference side by side of equations \eqref{e25b} and \eqref{e27b}, we obtain
\begin{align}
\Delta\left( J (\theta-
  \tilde{\theta})\right)-\frac{1}{\theta\tilde{\theta}}J(\theta-\tilde{\theta})
=  \frac{ \tilde{J}-J}{\tilde{\theta}} +\Delta\left( \tilde{\theta}(\tilde{J}-J)\right).\label{e27-25}
\end{align}
As  $J (\theta-\tilde{\theta})$ has zero Cauchy data on $\tilde{\gamma}$, we deduce from \cite[Theorem 1.7]{ARRV}  that there exists 
 $C(v_0, \overline V, h, K, \omega, \tilde{\Omega})>0$ and $1>\mu^\prime
(v_0, \overline V, h, K,  \omega, \tilde{\Omega})>0$ so that 
\begin{align}
\|J (\theta- \tilde{\theta})\|_{L^2(\omega)} \leq C\left( 
   \left\| \frac{ \tilde{J}-J}{\tilde{\theta}}\right\|_{L^2(\tilde{\Omega})} +
  \left\| \tilde{\theta}(\tilde{J}-J) \right\|_{H^1(\tilde{\Omega})}  
  \right)^{\mu^\prime}.
\end{align}
Whence the expected inequality follows.
\end{proof}

The rest of the proof is quite similar to that of Theorem \ref{theorem1.1}. We  apply again (1) of Theorem \ref{theorem-wii}  to  inequality \eqref{wei}.
\end{proof}

\begin{proof}[Proof of Theorem \ref{theorem1.3}]

We split $\gamma$ into two components $\gamma_+=\gamma \cap \Gamma_+$ and $\gamma_0=\gamma \cap \Gamma_0$.

Let $V$ and $\tilde{V}$ be as in the statement of Theorem \ref{theorem1.3}. As $p>n$, $W^{2,p}(\Omega )$ is continuously embedded in $C^1(\overline{\Omega})$. Whence $u_V^2\in W^{2,p}(\Omega )$. Inspecting the proof of \cite[Proposition 3.1]{ACT}, we get that there exists a constant $\delta =\delta \left(\Omega ,p,v_0, k ,\overline{V}, h,, \gamma_0\right)$ and a neighborhood $\mathcal{U}_0$ of $\gamma_0$ in $\omega \cup \Gamma_0$ so that $|u_V|^{-\delta}\in L^1(\mathcal{U}_0)$.
We get from the proof of \cite[Lemma 1.3]{ACT} that there exists $C_0 =C_0 \left(\Omega ,p,v_0, k ,\overline{V}, h, \gamma_0\right)$ such that
\[
\|V-\tilde{V}\|_{L^2(\mathcal{U}_0)}\le C_0\|(V-\tilde{V})u_V^2\|_{L^1(\mathcal{U}_0 )}^{\delta/(2+\delta)}.
\]
In light of \cite[Lemma B.1]{CK}, this inequality entails
\[
\|V-\tilde{V}\|_{L^\infty (\mathcal{U}_0)}\le C_0\|(V-\tilde{V})u_V^2\|_{L^1(\mathcal{U}_0 )}^{2\mu _0},
\]
for some $\mu_0= \mu_0 \left(\Omega ,p,v_0, k ,\overline{V}, h,, \gamma_0\right)$.

As in Theorem \ref{theorem1.1}, this inequality leads to the following one
\begin{equation}\label{t1.3.1}
\|V-\tilde{V}\|_{L^\infty (\mathcal{U}_0)}\le C_0\|I_V^{1/2}-I_{\widetilde{V}}^{1/2}\|_{H^1(\Omega )}^{\mu_0}.
\end{equation}

On the other hand, we easily check that 
\[
|u_V| \ge \frac{1}{2}\min_{\gamma _+}|u_V|=\frac{1}{2}\min_{\gamma _+}|h|\; (>0)
\]
in a neighborhood $\mathcal{U}_+$ of $\gamma_+$ in $\omega \cup \Gamma_+$, depending only $\Omega $, $p$, $v_0$, $k$,$\overline{V}$, $h$ and  $\gamma_+$. We apply once again \cite[Lemma B.1]{CK} in order to get
\[
\|V-\tilde{V}\|_{L^\infty (\mathcal{U}_+)}\le C_+\|(V-\tilde{V})u_V^2\|_{L^1(\mathcal{U}_+ )}^{\mu _+},
\]
for some $\mu_+= \mu_+ \left(\Omega ,p,v_0, k ,\overline{V}, h,, \gamma_+\right)$ and $C_+= C_+ \left(\Omega ,p,v_0, k ,\overline{V}, h,, \gamma_+\right)$.

From this inequality we deduce, again similarly as in the proof of Theorem \ref{theorem1.1}, 
\begin{equation}\label{t1.3.2}
\|V-\tilde{V}\|_{L^\infty (\mathcal{U}_+)}\le C_+\|I_V^{1/2}-I_{\tilde{V}}^{1/2}\|_{H^1(\Omega )}^{\mu _+}.
\end{equation}

Let $\tilde{\omega}\Subset \Omega$ so that $\omega \subset \tilde{\omega}\cup \mathcal{U}_0\cup \mathcal{U}_+$. By the interior stability estimate in Theorem \ref{theorem1.1}, there exists $\tilde{\mu}= \tilde{\mu} \left(\Omega ,p,v_0, k ,\overline{V}, h,, \omega \right)$ and $C= C\left(\Omega ,p,v_0, k ,\overline{V}, h, \omega \right)$ so that
\begin{equation}\label{t1.3.3}
\|V-\tilde{V}\|_{L^\infty (\tilde{\omega} )}\le C\|I_{V}^{1/2}-I_{\tilde{V}}^{1/2}\|_{H^1(\Omega )}^{ \tilde{\mu}}.
\end{equation}

We end up getting the expected inequality by combining \eqref{t1.3.1}, \eqref{t1.3.2} and  \eqref{t1.3.1}, with $\mu =\min (\mu_0 ,\mu _+,\tilde{\mu})$.
\end{proof}

\appendix
\section{}

In this appendix, $\Omega$ is a bounded domain of $\mathbb{R}^n$, $n\ge 2$, with Lipschitz boundary $\Gamma$. Let
\[
L=\mbox{div}(A\nabla \, \cdot )+V,
\]
where $V\in L^\infty (\Omega )$, $A=(a^{ij})$ is a symmetric matrix with  coefficients in $W^{1,\infty}(\Omega )$ and there exist $\kappa >0$ and $\Lambda >0$ so that 
\begin{equation}\label{eq1}
A(x)\xi \cdot \xi \geq \kappa |\xi |^2,\;\;  x\in \Omega , \; \xi \in \mathbb{R}^n,
\end{equation}
and 
\begin{equation}\label{eq2}
\|V\|_{L^\infty (\Omega )}+\|a^{ij}\|_{W^{1,\infty}(\Omega )}\le \Lambda ,\quad 1\leq i,j\leq n.
\end{equation}

Recall the following three-ball interpolation inequality, proved in  \cite{BCT} when $V=0$ but still holds for any bounded $V$ (see also \cite{Ch1}).

\begin{theorem}\label{theoremI}
Let $0<k<\ell<m$. There exist $C>0$ and $0<s <1$, only depending on  $\Omega$, $k$, $\ell$, $m$, $\kappa$ and $\Lambda$,  such that 
\begin{equation}\label{eq3}
\|v\|_{L^2(B(y,\ell r))}\leq C\|v\|_{L^2(B(y,kr))}^s\|v\|_{L^2(B(y,m r))}^{1-s},
\end{equation}
for all  $v\in H^1(\Omega )$ satisfying $Lv=0$ in $\Omega$, $y\in \Omega$ and $0<r< \mbox{dist}(y,\Gamma )/m$.
\end{theorem}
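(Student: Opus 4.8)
The plan is to derive the three-ball inequality from a single Carleman estimate for the principal operator $P=\mathrm{div}(A\nabla\,\cdot)$, treating $Vv$ as a zeroth-order perturbation that a large Carleman parameter absorbs. First I would normalize the geometry: the translation $x\mapsto x-y$ together with the dilation $x\mapsto rx$ sends the configuration to the fixed balls $B(0,k)\subset B(0,\ell)\subset B(0,m)$, and it rescales $A$ and $V$ within the same ellipticity class (the Lipschitz seminorm of $A(r\,\cdot)$ even improves by the factor $r<\mathbf{d}$, while $\|V(r\,\cdot)\|_{L^\infty}=\|V\|_{L^\infty}$), so it suffices to prove \eqref{eq3} for $y=0$, $r=1$ with constants depending only on $\Omega,k,\ell,m,\kappa,\Lambda$; the hypothesis $r<\mathrm{dist}(y,\Gamma)/m$ guarantees $B(0,m)\Subset\Omega$ after rescaling, so the argument is purely interior. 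Moreover, since $\|v\|_{L^2(B(0,k))}\le\|v\|_{L^2(B(0,k))}^{s}\|v\|_{L^2(B(0,m))}^{1-s}$ for every $s\in(0,1)$, it is enough to bound the annular norm $\|v\|_{L^2(B(0,\ell)\setminus B(0,k))}$, which is precisely what the Carleman estimate controls.

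Second, I would invoke a radial Carleman estimate of the type established in \cite{BCT}: for a pseudoconvex radial weight $\varphi$ with $\varphi(|x|)$ strictly decreasing (for instance a negative power of $|x|$, or $-\log|x|$ suitably composed), there are $\tau_0,c>0$ so that
\[
\tau\int e^{2\tau\varphi}|w|^2\,dx\le c\int e^{2\tau\varphi}|Pw|^2\,dx,\qquad\tau\ge\tau_0,
\]
for all $w\in H^2$ compactly supported in $B(0,m)\setminus\{0\}$. I would then apply this to $w=\chi v$, where $\chi\equiv 1$ on $B(0,\ell)\setminus B(0,k)$ and $\mathrm{supp}\,\chi\subset B(0,m)\setminus B(0,k/2)$. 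Using the equation in the form $Pv=-Vv$, one has $P(\chi v)=-\chi Vv+[P,\chi]v$, where the commutator $[P,\chi]v=(\mathrm{div}(A\nabla\chi))v+2A\nabla\chi\cdot\nabla v$ is supported in the two shells $\{k/2\le|x|\le k\}$ and $\{\ell\le|x|\le m\}$ on which $\nabla\chi\neq0$.

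Third comes the absorption and optimization. The perturbation satisfies $\|e^{\tau\varphi}\chi Vv\|_{L^2}\le\Lambda\|e^{\tau\varphi}\chi v\|_{L^2}$, which for $\tau$ large (depending on $c$ and $\Lambda$) is absorbed by the factor $\tau$ on the left — this is the only place where the hypothesis $V=0$ of \cite{BCT} is relaxed, at the cost of enlarging $\tau_0$ and introducing a $\Lambda$-dependence. Since $\varphi$ is radial and decreasing, replacing $e^{2\tau\varphi}$ by its extremal values on each shell turns the commutator contributions into $e^{2\tau\varphi(k/2)}\|v\|_{H^1(\{k/2\le|x|\le k\})}^2$ and $e^{2\tau\varphi(\ell)}\|v\|_{H^1(\{\ell\le|x|\le m\})}^2$, while the left side dominates $\tau\,e^{2\tau\varphi(\ell)}\|v\|_{L^2(B(0,\ell)\setminus B(0,k))}^2$; a Caccioppoli inequality on slightly enlarged shells upgrades the two $H^1$ norms to $L^2$ norms over $B(0,k)$ and $B(0,m)$. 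Rearranging yields
\[
\|v\|_{L^2(B(0,\ell)\setminus B(0,k))}\le C\big(e^{\tau a}\|v\|_{L^2(B(0,k))}+e^{-\tau b}\|v\|_{L^2(B(0,m))}\big)
\]
with $a=\varphi(k/2)-\varphi(\ell)>0$ and $b=\varphi(\ell)-\varphi(m)>0$; optimizing in $\tau$ (balancing the two terms when the ratio of norms is large, and taking $\tau=\tau_0$ otherwise) gives \eqref{eq3} with exponent $s=b/(a+b)\in(0,1)$ and $C$ depending only on the listed data.

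The main obstacle, and the step deserving the most care, is the low regularity of the coefficients: $A$ is only Lipschitz, so the Carleman estimate cannot be obtained from smooth pseudodifferential calculus and must be proved by direct integration by parts — which is exactly why I would lean on the version already available in \cite{BCT,Ch1} rather than reprove it. The second delicate point is uniformity in $y$ and $r$: the weight $\varphi$ and the cutoff $\chi$ must be fixed in the rescaled variables so that the whole argument is scale-covariant, and one must verify, as in the normalization step, that rescaling leaves the ellipticity constant $\kappa$ and the bound $\Lambda$ controlling both $\|a^{ij}\|_{W^{1,\infty}}$ and $\|V\|_{L^\infty}$ intact (indeed non-increasing), so that the final $C$ and $s$ depend only on $\Omega,k,\ell,m,\kappa,\Lambda$.
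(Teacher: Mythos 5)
Your overall strategy is the right one, and in fact it matches the route of the sources the paper leans on: the paper itself gives no proof of Theorem \ref{theoremI}, it simply quotes \cite{BCT} (where the inequality is proved for $V=0$) together with \cite{Ch1}, and remarks that the result persists for bounded $V$. Your plan --- rescale to the fixed configuration $B(0,k)\subset B(0,\ell)\subset B(0,m)$, run a Carleman estimate for the principal part $P=\mathrm{div}(A\nabla\,\cdot)$ with a decreasing radial weight, and absorb the zeroth-order term by taking $\tau$ large --- is exactly the standard way this extension is carried out, and the absorption step is stated correctly. (One harmless slip in the normalization: the rescaled equation has potential $r^{2}V(y+r\,\cdot)$, not $V(y+r\,\cdot)$; since $r\le \mathbf{d}/m$ this stays in a class controlled by $\Omega$, $m$, $\Lambda$, so nothing is lost.)

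There is, however, a concrete error in your cutoff geometry which, as written, destroys the H\"older exponent. You take $\chi\equiv 1$ exactly on $B(0,\ell)\setminus B(0,k)$, so the outer transition region is $\{\ell\le|x|\le m\}$; you keep the left-hand side of the Carleman inequality on $B(0,\ell)\setminus B(0,k)$, where the decreasing weight satisfies $e^{2\tau\varphi}\ge e^{2\tau\varphi(\ell)}$, and you must bound the outer commutator term by the \emph{supremum} of the weight on its support, which is $e^{2\tau\varphi(\ell)}$ (attained at $|x|=\ell$), not $e^{2\tau\varphi(m)}$: the value $\varphi(m)$ is the infimum there and cannot serve as an upper bound. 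Hence the exponential factors on the two sides coincide, and after dividing by $\tau e^{2\tau\varphi(\ell)}$ the outer term decays only like $1/\tau$; your displayed inequality with $b=\varphi(\ell)-\varphi(m)>0$ does not follow --- effectively $b=0$. Optimizing $e^{\tau a}\|v\|_{L^2(B(0,k))}+\tau^{-1/2}\|v\|_{L^2(B(0,m))}$ in $\tau$ then yields only a logarithmic, not a H\"older, dependence, so \eqref{eq3} is lost. The repair is standard and cheap: let the plateau of $\chi$ extend strictly beyond $\ell$, say $\chi\equiv 1$ on $\{3k/4\le|x|\le \ell'\}$ with $\ell'=(\ell+m)/2$, with transition shells $\{k/2\le|x|\le 3k/4\}$ and $\{\ell'\le|x|\le m'\}$, $m'<m$. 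The left-hand side still dominates $\tau e^{2\tau\varphi(\ell)}\|v\|_{L^2(B(0,\ell)\setminus B(0,k))}^{2}$, while the outer commutator now carries the factor $e^{2\tau\varphi(\ell')}$ with $\varphi(\ell')<\varphi(\ell)$, giving $b=\varphi(\ell)-\varphi(\ell')>0$ and $s=b/(a+b)$ as you intended. The same modification is needed for your Caccioppoli step anyway: with transitions touching $|x|=k$ and $|x|=m$, the slightly enlarged shells on which Caccioppoli is applied protrude outside $B(0,k)$ and $B(0,m)$; with the transitions pulled strictly inside, the enlargements remain in $B(0,k)$ and $B(0,m)$ and the right-hand side is genuinely controlled by $\|v\|_{L^2(B(0,k))}$ and $\|v\|_{L^2(B(0,m))}$ as claimed.
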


We know from \cite[Theorem 2.4.7, page 53]{HP} that any Lipschitz domain has the uniform interior cone condition, abbreviated to \textbf{UICP} in the sequel. In particular, there exist $R>0$ and $\theta \in \left]0,\frac{\pi}{2}\right[$ so that, to any $\tilde{x}\in \Gamma$ corresponds $\xi =\xi (\tilde{x})\in \mathbb{S}^{n-1}$ for which
\[
\mathcal{C}(\tilde{x})=\{x\in \mathbb{R}^n;\; 0<|x-\tilde{x}|<R,\; (x-\tilde{x})\cdot \xi >|x-\tilde{x}|\cos \theta \}\subset \Omega .
\]

Define the geometric distance $d_g^D$, on a bounded domain $D$ of $\mathbb{R}^n$, by
\[
d_g^D(x,y)=\inf\left \{ \ell (\psi ) ;\; \psi :[0,1]\rightarrow D \; \mbox{Lipschitz path joining}\; x \; \mbox{to}\; y\right\},
\] 
where
\[
\ell (\psi )= \int_0^1|\dot{\psi}(t)|dt
\]
is the length of $\psi$.

Note that, according to Rademacher's theorem, any Lipschitz continuous function $\psi :[0,1]\rightarrow D$ is almost everywhere differentiable with $|\dot{\psi}(t)|\le k$ a.e. $t\in [0,1]$, where $k$ is the Lipschitz constant of $\psi$.

\begin{lemma}\label{Glemma}
Let $D$ be a bounded Lipschitz domain of $\mathbb{R}^n$. Then $d_g^D\in L^\infty (D \times D )$.
\end{lemma}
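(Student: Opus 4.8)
The plan is to reduce the claim to a uniform bound for the distance from an arbitrary point of $D$ to a single fixed interior base point, and then to control the two regimes --- the deep interior and the boundary collar --- separately. First I would fix $x_\ast\in D$ with $\mbox{dist}(x_\ast,\partial D)>0$ and observe, by the triangle inequality for $d_g^D$, that it suffices to bound $\sup_{x\in D}d_g^D(x,x_\ast)$, since $d_g^D(x,y)\le d_g^D(x,x_\ast)+d_g^D(x_\ast,y)$. As $D$ is open and connected it is polygonally connected, so $d_g^D(x,x_\ast)<\infty$ for every $x$; moreover $x\mapsto d_g^D(x,x_\ast)$ is upper semicontinuous on $D$, because if $x_k\to x$ with $x$ interior then for large $k$ the segment $[x_k,x]$ lies in $D$ and may be prepended to a near-optimal path from $x$ to $x_\ast$, whence $\limsup_k d_g^D(x_k,x_\ast)\le d_g^D(x,x_\ast)$. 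Consequently $d_g^D(\cdot,x_\ast)$ is bounded on every compact subset of $D$: fixing a small $\delta_0>0$ for which $K:=\{x\in D;\ \mbox{dist}(x,\partial D)\ge \delta_0\}$ is a nonempty compact subset of $D$ containing $x_\ast$, there is $L_0<\infty$ with $d_g^D(z,x_\ast)\le L_0$ for all $z\in K$.

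Next I would treat the boundary collar by means of the uniform interior cone condition supplied by \cite{HP}: there exist $R>0$ and $\theta\in(0,\tfrac{\pi}{2})$ and, for each boundary point $\tilde x$, a direction $\xi(\tilde x)$ with $\mathcal{C}(\tilde x)\subset D$. The solid cone is convex, so the whole segment joining any point of $\mathcal{C}(\tilde x)$ to the axis point $p(\tilde x)=\tilde x+\tfrac{R}{2}\xi(\tilde x)$ stays in $\mathcal{C}(\tilde x)\subset D$ and has length at most $\tfrac{3R}{2}$; an inscribed-ball computation gives $B\!\left(p(\tilde x),\tfrac{R}{2}\sin\theta\right)\subset\mathcal{C}(\tilde x)$, hence $\mbox{dist}(p(\tilde x),\partial D)\ge \tfrac{R}{2}\sin\theta$. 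The key geometric point, which I would extract from the finitely many Lipschitz graph charts covering $\partial D$ together with a Lebesgue-number argument, is that after shrinking $\delta_0$ one may arrange that every $x\in D$ with $\mbox{dist}(x,\partial D)<\delta_0$ lies in some cone $\mathcal{C}(\tilde x)$; concretely, $\tilde x$ can be taken as the foot of $x$ in the local chart, so that $x-\tilde x$ points along $\xi(\tilde x)$. Such an $x$ is then joined by a segment of length $\le \tfrac{3R}{2}$ to the interior point $p(\tilde x)\in K$, after also imposing $\delta_0\le \tfrac{R}{2}\sin\theta$.

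Combining the two regimes yields $d_g^D(x,x_\ast)\le \tfrac{3R}{2}+L_0$ for every $x\in D$ --- collar points first travel to $K$, while interior points already lie in $K$ --- and therefore $d_g^D(x,y)\le 3R+2L_0$ for all $x,y\in D$, which is the asserted membership $d_g^D\in L^\infty(D\times D)$. I expect the main obstacle to be precisely the geometric step of the preceding paragraph: verifying, uniformly in the boundary point, that every collar point sits inside an interior cone and hence admits a bounded-length, convex shortcut into the fixed interior region $K$. This is exactly where the Lipschitz (equivalently, uniform cone) structure of $D$ is indispensable, since connectedness alone would not prevent $d_g^D$ from blowing up as the points approach $\partial D$.
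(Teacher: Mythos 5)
Your proposal is correct. Note, however, that the paper itself does not prove this lemma at all: it simply defers to the reference \cite{CY}, so there is no in-paper argument to compare against line by line. Your proof is a legitimate, self-contained alternative, and its structure is the natural one: reduce to bounding $d_g^D(\cdot,x_\ast)$ for a fixed interior base point, handle the compact interior region $K=\{ \mbox{dist}(\cdot,\partial D)\ge\delta_0\}$ by finiteness plus upper semicontinuity (equivalently, local Lipschitz continuity) of $d_g^D(\cdot,x_\ast)$, and handle the boundary collar by the uniform interior cone property, using convexity of each cone to produce a straight shortcut of length at most $\tfrac{3R}{2}$ into the set $K$ (after imposing $\delta_0\le\tfrac{R}{2}\sin\theta$ so that the axis points $p(\tilde x)$ land in $K$). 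The one step that genuinely requires care is the one you flagged: the abstract \textbf{UICP} statement, which only provides \emph{some} direction $\xi(\tilde x)$ per boundary point, is not by itself enough to conclude that a collar point $x$ lies in the cone of its \emph{nearest} boundary point; you correctly resolve this by going back to the finitely many Lipschitz graph charts, taking $\tilde x$ to be the chart-vertical foot of $x$ so that $x-\tilde x$ is parallel to the cone axis, with a Lebesgue-number argument to guarantee a common chart and $|x-\tilde x|\le\sqrt{1+L^2}\,\mbox{dist}(x,\partial D)<R$. What your route buys, compared with the paper's citation, is an elementary argument with an explicit bound $d_g^D\le 3R+2L_0$ in terms of the cone parameters and an interior compactness constant, and it reuses exactly the cone property that the appendix already invokes for the proof of Theorem \ref{theorem2.1}; what the citation buys is brevity. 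Two small points you may wish to make explicit in a written version: $D$ being a \emph{domain} (connected) is what guarantees finiteness of $d_g^D$ via polygonal connectedness, and measurability of $d_g^D$ on $D\times D$ (needed for the $L^\infty$ statement, and immediate from the local Lipschitz estimate $d_g^D(x',x_\ast)\le|x'-x|+d_g^D(x,x_\ast)$ near interior points).
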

A proof of this lemma can be found in \cite{CY}.

In the rest of this text
\[
\mathbf{d}_g=\|d_g^\Omega\|_{L^\infty (\Omega \times \Omega )}.
\]

\begin{proof}[Proof of Theorem \ref{theorem2.1}]
In this proof $C$ denote a generic constant that can depend only on $\Omega$, $v_0$, $V_0$, $\kappa$ and $M$.

\smallskip
\textbf{Step 1.} Let $y, y_0\in \Omega ^{3\delta}$ and $\psi :[0,1]\rightarrow \Omega$ be a Lipschitz path joining $y_0$ to $y$ so that $\ell (\psi)\le d_g^\Omega (y_0,y)+1$. Let $t_0=0$ and $t_{k+1}=\inf \{t\in [t_k,1];\; \psi (t)\not\in B(\psi (t_k),\delta )\}$, $k\geq 0$. We claim that there exists an integer $N\geq 1$ so that $\psi (1)\in B(\psi(t_N),\delta )$. If not, we would have $\psi (1)\not\in B(\psi (t_k),\delta )$ for any $k\ge 0$. As the sequence $(t_k)$ is non decreasing and bounded from above by $1$, it converges to $\hat{t}\le 1$. In particular, there exists an integer $k_0\geq 1$ so that $\psi (t_k)\in B\left(\psi (\hat{t}),\delta/2\right)$, $k\ge k_0$. But this contradicts the fact that $\left|\psi (t_{k+1})-\psi (t_k)\right| =\delta$, $k\ge 0$.

\smallskip
Let us check that $N\le N_0$, where $N_0$ only depends on $\mathbf{d}_g$ and $\delta$. Pick $1\le j\le n$ so that 
\[
\max_{1\leq i\leq n} \left|\psi _i(t_{k+1})-\psi _i(t_k)\right| =\left|\psi _j(t_{k+1})-\psi _j(t_k)\right|.
\]
Then
\[
\delta \le n\left|\psi _j (t_{k+1})-\psi _j(t_k)\right|=n\left| \int_{t_k}^{t_{k+1}}\dot{\psi}_j(t)dt\right|\le  n\int_{t_k}^{t_{k+1}}|\dot{\psi}(t)|dt  .
\]
Consequently, where $t_{N+1}=1$, 
\[
(N+1)\delta \le n\sum_{k=0}^N\int_{t_k}^{t_{k+1}}|\dot{\psi}(t)|dt=n\ell (\psi)\le n(\mathbf{d}_g+1).
\]
Therefore
\[
N\le N_0=\left[ \frac{n(\mathbf{d}_g+1)}{\delta}\right].
\]
Here $\left[ n(\mathbf{d}_g+1)/\delta\right]$ is the integer part of $n(\mathbf{d}_g+1)/\delta$.

Let $y_k=\psi (t_k)$, $0\le k\le N$.  If $|z-y_{k+1}|<\delta$ then 
\[ |z-y_k|\le |z-y_{k+1}|+|y_{k+1}-y_k|<2\delta. \] 
In other words, $B(y_{k+1},\delta )\subset B(y_k,2\delta)$.

\smallskip
We get from Theorem \ref{theoremI}
\begin{equation}\label{est1}
\|u\|_{L^2(B(y_j,2\delta ))}\leq C\|u\|_{L^2(B(y_j,3\delta ))}^{1-s}\|u\|_{L^2(B(y_j,\delta ))}^s,\quad 0\leq j\leq N.
\end{equation}

\smallskip
Set $I_j=\|u\|_{L^2(B(y_j,\delta ))}$, $0\le j\le N$ and $I_{N+1}=\|u\|_{L^2(B(y,\delta ))}$. Since $B(y_{j+1},\delta )\subset B(y_j,2\delta )$, $1\le j\le N-1$, estimate \eqref{est1} implies
\begin{equation}\label{est2}
I_{j+1}\le C M_0^{1-s}I_j^s,\quad 0\le j\le N,
\end{equation}
where we set $M_0=\|u\|_{L^2(\Omega )}$.

\smallskip
Let $C_1=C^{1+s+\ldots +s^{N+1}}$ and $\beta =s^{N+1}$. Then, by a simple induction argument, estimate \eqref{est2} yields
\begin{equation}\label{est3}
I_{N+1}\leq C_1M_0^{1-\beta}I_0^\beta .
\end{equation}

Without loss of generality, we assume in the sequel that $C\ge 1$ in \eqref{est2}.
Using that $N\leq N_0$, we have 
\begin{align*}
&\beta \ge \beta _0=s^{N_0+1}\ge se^{-\frac{\kappa}{\delta}}=\psi (\delta ),\;\;  \mbox{where}\; \kappa =n(\mathbf{d}_g+1)|\ln s|,
\\
&C_1\le C^{\frac{1}{1-s}},
\\
&\left(\frac{I_0}{M_0}\right)^\beta\le \left(\frac{I_0}{M_0}\right)^{\beta_0}.
\end{align*}
These estimates in \eqref{est3} gives
\[
\frac{I_{N+1}}{M_0}\leq C\left(\frac{I_0}{M_0}\right)^{\psi (\delta )}.
\]
In other words,
\[
\frac{\|u\|_{L^2(B(y,\delta ))}}{\|u\|_{L^2(\Omega )}}\le C\left(\frac{\|u\|_{L^2(B(y_0,\delta ))}}{\|u\|_{L^2(\Omega )}}\right)^{\psi (\delta )}.
\]
Applying Young's inequality, we get from this inequality
\begin{equation}\label{est4}
\|u\|_{L^2(B(y,\delta ))}\le C\left( \epsilon^{\frac{1}{1-\psi (\delta )}}\| u\|_{L^2(\Omega )}+\epsilon^{-\frac{1}{\psi (\delta )}}\|u\|_{L^2(B(y_0,\delta ))}\right),
\end{equation}
$\epsilon >0$,  $y,y_0\in \Omega^{3\delta}$.

\textbf{Step 2.} Fix $\tilde{x}\in \Gamma$ so that $|u(\tilde{x})|=\|u\|_{L^\infty (\Gamma )}$. Let $\xi =\xi (\tilde{x})$ be as in the definition of the \textbf{UICP}. Let $x_0=\tilde{x}+\delta \xi$, $\delta \le R/2$, $d_0=|x_0-\widetilde{x}|=\delta $ and $\rho_0=d_0\sin \theta/3$. Note that $B(x_0,3\rho_0)\subset \mathcal{C}(\tilde{x})$.

\smallskip
By induction in $k$, we construct a sequence of balls $(B(x_k, 3\rho _k))$, contained in $\mathcal{C}(\tilde{x})$, as follows
\begin{eqnarray*}
\left\{
\begin{array}{ll}
x_{k+1}=x_k-\alpha _k \xi ,
\\
\rho_{k+1}=\mu \rho_k ,
\\
d_{k+1}=\mu d_k,
\end{array}
\right.
\end{eqnarray*}
where
\[
d_k=|x_k-\tilde{x}|,\;\; \rho _k=\vartheta d_k,\;\; \alpha _k=(1-\mu)d_k ,
\]
with
\[
\vartheta=\frac{\sin \theta}{3},\quad \mu =\frac{3-2\sin\theta}{3-\sin\theta} .
\]
Note that this construction guarantees that, for each $k$, $B(x_k,3\rho _k)\subset \mathcal{C}(\tilde{x})$ and
\begin{equation}\label{3.101}
B(x_{k+1},\rho _{k+1})\subset B(x_k,2\rho _k).
\end{equation}

We get, by applying Theorem \ref{theoremI}, that there exist $C>0$ and $0<s<1$, only depending on $\Omega$, $v_0$ and $V_0$, so that
\begin{align}
 \| u\|_{L^2(B(x_k,2\rho _k))}&\le C\|u\|_{L^2(B(x_k,3\rho _k))}^{1-s}\|u\|_{L^2(B(x_k,\rho _k))}^s\label{3.102}
\\
& \le CM^{1-s}\|u\|_{L^2(B(x_k,\rho _k))}^s.\nonumber
\end{align}
In light of \eqref{3.101}, \eqref{3.102} gives
\begin{equation}\label{3.103}
\|u\|_{L^2(B(x_{k+1},\rho _{k+1}))}\le CM^{1-s}\|u\|_{L^2(B(x_k,\rho _k))}^s.
\end{equation}

Let $J_k=\|u\|_{L^2(B(x_k,\rho _k))}$, $k\ge 0$. Then \eqref{3.103} is rewritten as follows 
\[
J_{k+1}\le CM^{1-s}J_k^s.
\]
An induction in $k$ yields
\[
J_k\le C^{1+s +\ldots +s^{k-1}}M^{(1-s)(1+s +\ldots +s^{k-1})}J_0^{s^k}.
\]

That is
\begin{equation}\label{3.104}
J_k\le \left[C^{\frac{1}{1-s}}M\right]^{1-s^k}J_0^{s^k}.
\end{equation}
Applying Young's inequality we obtain, for any $\epsilon>0$,

\begin{align}
J_k&\le (1-s^k)\epsilon^{\frac{1}{1-s^k}}C^{\frac{1}{1-s}}M+s^k\epsilon^{-\frac{1}{s^k}}J_0 \label{3.105}
\\
& \le \epsilon^{\frac{1}{1-s^k}}C^{\frac{1}{1-s}}M+\epsilon^{-\frac{1}{s^k}}J_0\nonumber
\\
& \le C\epsilon^{\frac{1}{1-s^k}}M+\epsilon^{-\frac{1}{s^k}}J_0. \nonumber 
\end{align}

Now, since $u\in C^{0,1/2}(\overline{\Omega})$,
\[
|u(\tilde{x})|\le [u]_{1/2} |\tilde{x}-x|^{1/2} +|u(x)|,\quad x\in B(x_k,\rho _k).
\]

Hence
\[
|\mathbb{S}^{n-1}|\rho_k^n|u(\tilde{x})|^2\le 2[u]_{1/2}^2\int_{B(x_k,\rho _k)} |\tilde{x}-x|dx+ 2\int_{B(x_k,\rho _k)} |u(x)|^2dx.
\]
Or equivalently 
\[
|u(\tilde{x})|^2\le 2|\mathbb{S}^{n-1}|^{-1}\rho_k^{-n}\left([u]_{1/2}^2\int_{B(x_k,\rho _k)} |\tilde{x}-x|dx+ \int_{B(x_k,\rho _k)} |u(x)|^2dx\right).
\]
A simple computation shows that $d_k=\mu ^kd_0$. Then 
\[
|\tilde{x}-x|\leq |\tilde{x}-x_k|+|x_k-x|\le d_k+\rho_k=(1+\vartheta)d_k=(1+\vartheta )\mu ^kd_0.
\]
Therefore,
\[
|u(\tilde{x})|^2\le 2\left(M^2(1+\vartheta )^{1/2} d_0^{1/2}\mu ^{k}+ |\mathbb{S}^{n-1}|^{-1}(\vartheta d_0)^{-n}\mu ^{-nk}\|u\|_{L^2(B(x_k,\rho _k))}^2\right)
\]
implying, when $d_0 (=\delta ) \le 1$,
\begin{equation}\label{3.106}
|u(\tilde{x})|\le C\left(M\mu ^{k/2}+\mu ^{-nk/2}\delta ^{-n/2}J_k\right).
\end{equation}

Inequalities  \eqref{3.105} and \eqref{3.106} gives
\begin{equation}\label{3.107}
|u(\tilde{x})|\le C\left(\mu ^{k/2}M+\mu ^{-nk/}\epsilon^{1/(1-s^k)}\delta ^{-n/2}M+\mu ^{-nk/2}\epsilon^{-1/s^k}\delta^{-n/2}J_0\right).
\end{equation}
We get, by choosing $\epsilon=\mu^{(1-s^k)(n+1)k/2}$ in \eqref{3.107},
\[
|u(\tilde{x})|\le C\left(\mu ^{k/2}M+ \mu ^{k/2}\delta ^{-n/2}M+ \mu ^{-(n+1)k/(2s^k)+k/2}\delta^{-n/2}J_0\right).
\]
Hence
\begin{equation}\label{3.108}
|u(\tilde{x})|\le C\delta ^{-n/2}\left(\mu ^{k/2}M+ \mu ^{-(n+1)k/(2s^k)}J_0\right),
\end{equation}
by using  $\delta \le \mbox{diam}(\Omega )$.

Let $t>0$ and $k$ be the integer so that $k\le t<k+1$. It follows from \eqref{3.108}
\begin{equation}\label{3.109}
|u(\tilde{x})|\leq C\delta ^{-n/2}\left(\mu ^{t/2}M+\mu ^{-t(n+1)/(2s^t)}J_0\right).
\end{equation}
Let $p =(n+1)/2+|\ln s|$. Then \eqref{3.109} yields
\begin{equation}\label{3.110}
|u(\tilde{x})|\leq C\delta ^{-n/2}\left(\mu ^{t/2}M+\mu ^{-e^{p t}}J_0\right).
\end{equation}
Putting  $e^{p t}=1/\epsilon$, $0<\epsilon <1$, we get from \eqref{3.110} 
\begin{equation}\label{3.111}
|u(\tilde{x})|\leq C\delta ^{-n/2}\left(\epsilon ^{\beta}M+e^{|\ln \mu |/\epsilon} J_0\right),
\end{equation}
where $\beta = |\ln \mu |/(2p)$.

\textbf{Step 3.} A combination of \eqref{est4} and \eqref{3.111} entails, with $0<\epsilon <1$ and $\epsilon_1>0$,
\[
|u(\tilde{x})|\le C\delta ^{-n/2}\left(\epsilon ^{\beta}M+e^{|\ln \mu |/\epsilon} \left( \epsilon_1^{1/(1-\psi (\delta ))}M+\epsilon_1^{-1/\psi (\delta )}\|u\|_{L^2(B(y_0,\delta ))}\right)\right).
\]
Hence, where $\ell =n/2$ and $\rho =|\ln \mu |$,
\[
\|u\|_{L^\infty (\Gamma )}\le C\delta^{-\ell}\left(\epsilon ^{\beta}M+e^{\rho /\epsilon} \left( \epsilon_1^{1/(1-\psi (\delta ))}M+\epsilon_1^{-1/\psi (\delta )}\|u\|_{L^2(B(y_0,\delta ))^n}\right)\right).
\]
In this inequality we take $\epsilon_1=\epsilon^{\beta (1-\psi (\delta ))}e^{-\rho(1-\psi (\delta ))/\epsilon}$. Using that 
\[
\epsilon _1^{-1/\psi(\delta )}\le e^{(\rho +\beta )(1-\psi(\delta ))/(\epsilon \psi(\delta ))},
\]
we obtain in a straightforward manner
\[
\|u\|_{L^\infty (\Gamma )}\le C\delta^{-\ell}\left(\epsilon ^{\beta}M+e^{(\rho +\beta )(1-\psi(\delta ))/(\epsilon \psi(\delta ))}\|u\|_{L^2(B(y_0,\delta ))^n}\right).
\]
If $\phi(\delta )=(\rho +\beta)(1-\psi(\delta ))/\psi(\delta )$ then we can rewrite the previous estimate as follows
\[
\|u\|_{L^\infty (\Gamma )}\le C\delta^{-\ell}\left(\epsilon ^{\beta}M+e^{\phi (\delta )/\epsilon}\|u\|_{L^2(B(y_0,\delta ))^n}\right),
\]
or equivalently 
\begin{equation}\label{est5}
\|u\|_{L^\infty (\Gamma )}\le C\delta^{-\ell}\left(t^{-\beta}M+e^{t\phi (\delta )}\|u\|_{L^2(B(y_0,\delta ))}\right),\;\; t>1.
\end{equation}
If $M/\|u\|_{L^2(B(y_0,\delta ))}> e^{\phi (\delta )}$, we find $t>1$ so that $M/\|u\|_{L^2(B(y_0,\delta ))}=t^\beta e^{t\phi (\delta )}$. The estimate \eqref{est5} with that $t$ yields
\[
\|u\|_{L^\infty (\Gamma )}\le C\delta^{-\ell}M\left(\frac{1}{(\beta +\phi(\delta )}\ln\left(\frac{M}{\|u\|_{L^2(B(y_0,\delta ))}} \right)\right)^{-\beta} .
\]
In light of the inequality $\|u\|_{L^\infty (\Gamma )}\ge \eta$, this estimate implies
\[
\eta\le C\delta^{-\ell}M\left(\frac{1}{\beta +\phi(\delta )}\ln\left(\frac{M}{\|u\|_{L^2(B(y_0,\delta ))}} \right)\right)^{-\beta} .
\]
This inequality is equivalent to the following one
\begin{equation}\label{est6}
Me^{-C(\beta +\phi(\delta ))\delta^{-\ell /\beta}\left(M/\eta\right)^{1/\beta}}\le \|u\|_{L^2(B(y_0,\delta ))}.
\end{equation}

Otherwise,
\begin{equation}\label{est7}
Me^{-\phi (\delta )}\le \|u\|_{L^2(B(y_0,\delta ))}.
\end{equation}

We derive from \eqref{est6} and \eqref{est7} that, there exist $C>0$ and $\delta ^\ast$ so that 
\[
e^{-e^{c/\delta}}\le \|u\|_{L^2(B(y_0,\delta ))^n},\; 0< \delta \le \delta ^\ast.
\]
Obviously, a similar estimate holds for $\delta \ge \delta ^\ast$.
\end{proof}

\end{document}